\theoremstyle{plain}
\newtheorem{thm}{Theorem}[section]
\newtheorem{prop}[thm]{Proposition}
\newtheorem{lem}[thm]{Lemma}
\newtheorem*{acknow}{Acknowledgements}
\theoremstyle{remark}
\newtheorem{rem}[thm]{Remark}
\theoremstyle{definition}
\newtheorem{defin}[thm]{Definition}
\newcommand\N{\mathbb{N}}
\newcommand\Z{\mathbb{Z}}
\newcommand\WB[1]{WB_{#1}}	%welded braid group
\newcommand\ii{i}
\newcommand\inv{^{-1}}
\newcommand\jj{j}
\newcommand\nn{n}
\newcommand\nno{{n-1}}
\newcommand\rr[1]{\rho_{#1}^{\phantom{1}}}	%generators of permutations
\newcommand\rrq[2]{\rho_{#1}^{#2}}	%square of generators of permutations
\newcommand\sig[1]{\sigma_{#1}^{\phantom{1}}} %generators of braids
\newcommand\sigg[2]{\sigma_{#1}^{#2}}	%square of generators of braids
\newcommand\siginv[1]{\sigg{#1}{-1}}	%inverse of generators of braids
\renewcommand{\ker}{\ensuremath{\operatorname{\text{Ker}}}}
\renewcommand{\p@enumii}{}
\def\@enum@{\list{\csname label\@enumctr\endcsname}%
          {\usecounter{\@enumctr}\def\makelabel##1{
\normalfont\ignorespaces\emph{{##1}~}}
\setlength{\labelsep}{3pt}
\setlength{\parsep}{0pt}
\setlength{\itemsep}{0pt}
\setlength{\leftmargin}{0pt}
\setlength{\labelwidth}{0pt}
\setlength{\listparindent}{\parindent}
\setlength{\itemsep}{0pt}
\setlength{\itemindent}{0pt}
\topsep=3pt plus 1pt minus 1 pt}}
\renewcommand{\to}{\ensuremath{\longrightarrow}}
\begin{document}

\title[Unrestricted virtual braids and crystallographic braid groups]{Unrestricted virtual braids and crystallographic braid groups}

\author[Paolo Bellingeri]{Paolo Bellingeri}
\address{Normandie Univ, UNICAEN, CNRS, LMNO, 14000 Caen, France}
\email{paolo.bellingeri@unicaen.fr}

\author[John Guaschi]{John Guaschi}
\address{Normandie Univ, UNICAEN, CNRS, LMNO, 14000 Caen, France}
\email{john.guaschi@unicaen.fr}

\author[Stavroula Makri]{Stavroula Makri}
\address{Normandie Univ, UNICAEN, CNRS, LMNO, 14000 Caen, France}
\email{stavroula.makri@unicaen.fr}

\date{\today}

%\dedicatory{(In progress)}

\subjclass{Primary 20F36; Secondary 20H15}

\keywords{Braid groups, virtual and welded braid groups, unrestricted virtual braid groups}

%\date{\today}

\begin{abstract}
We show that the crystallographic braid group $B_n/[P_n,P_n]$ embeds naturally in the group of unrestricted virtual braids $UVB_n$, we give new proofs of known results about the torsion elements of $B_n/[P_n,P_n]$, and we characterise the torsion elements of $UVB_n$. 
\end{abstract}

\maketitle

\section{Introduction}
Let $n\geq 1$. The group of unrestricted virtual braids, denoted throughout this paper by $UVB_n$, was introduced by Kauffman and Lambropoulou in~\cite{KaL} as the analogue of fused links in the setting of braids. The classification of fused links is now well known. Such links are distinguished by their \emph{virtual linking number}, see for instance~\cite{N}, where they are considered as the closure of unrestricted virtual braids, as well as~\cite{ABMW} for their classification in terms of Gauss diagrams. Unrestricted virtual braid groups occur as natural quotients of virtual and welded braid groups. They appear for instance in~\cite{KMRW} in the study of local representations of welded braid groups, where they are called \emph{symmetric loop braid groups}, and they may be decomposed as a semi-direct product of a right-angled Artin group, which is in fact the pure subgroup $UVP_n$ of $UVB_n$, by the symmetric group $S_n$~\cite{BBD}. The main aim of this paper is to characterise the torsion elements of $UVB_n$ using this decomposition, namely to show that any element of finite order is a conjugate of an element of $S_n$ by an element of $UVP_n$.

The structure of this paper is as follows. In Section~\ref{sec:uvb}, we give presentations of the virtual braid groups $VB_n$, of the welded braid groups $WB_n$, and of $UVB_n$. In this way, $UVB_n$ may be viewed as a quotient of both $VB_n$ and $WB_n$. We also recall two important results of~\cite{BBD} that describe the structure of the pure unrestricted virtual braid group $UVP_n$ as a direct sum of copies of the free group $F_2$ on two generators, which as mentioned above, allows us to decompose $UVB_n$ as a semi-direct product of $UVP_n$ and $S_n$ in a natural way. A similar decomposition holds for $VB_n$ and $WB_n$, but the canonical homomorphism $\eta \colon\thinspace B_n \to UVB_n$, where $B_n$ is the Artin braid group, is not injective, which is in contrast with the nature of the corresponding homomorphisms for $VB_n$ and $WB_n$. In Section~\ref{sec:cryst}, we study the image of $\eta$, and in Proposition~\ref{prop:cris}, we prove that it is isomorphic to the quotient $B_n/[P_n,P_n]$, where $P_n$ is the pure Artin braid group, and $[P_n,P_n]$ is its commutator subgroup. This quotient has been the subject of recent study, one of the reasons being that it is a crystallographic group~\cite{GGO17,GGO19,GGO20}. The results of~\cite{GGO17} have been generalised to quasi-Abelianised quotients of complex reflection groups~\cite{BM,M16}, and to surface braid groups~\cite{GGOP}. This enables us to give an alternative proof in Proposition~\ref{prop:cris3} and Remark~\ref{rem:uvbnhn} of the fact that $B_n/[P_n,P_n]$ embeds in the semi-direct product $P_n/[P_n,P_n] \rtimes S_n$. In the final section of the paper, Section~\ref{sec:torsionuvbn}, we study the torsion elements of $UVB_n$. We apply the embedding of Proposition~\ref{prop:cris} to give a new combinatorial proof in Theorem~\ref{thm:ord2} of the fact that there are no elements of even order in $B_n/[P_n,P_n]$, and in Theorem~\ref{thm:thm_of_torsion}, we characterise the torsion elements of $UVB_n$ by showing that every such element is conjugate to an element of $S_n$ by an element of $UVP_n$. To our knowledge, it is not known whether a analogous result holds for $VB_n$ and $WB_n$. 

\section{Unrestricted virtual braids}\label{sec:uvb}

In order to define unrestricted virtual braid groups, in this section we recall first the notions of virtual and welded braid groups by exhibiting their usual group presentations. 

\begin{defin}\label{def:vbn}
The \emph{virtual braid group} $VB_n$ is the group defined by the group presentation:

\noindent
Generators: $\{\sig{1}, \ldots, \sig{n-1}, \rr{1}, \ldots, \rr{n-1}\}$

\noindent
Relations:
\begin{align*}
\sig{\ii}  \sig{i+1} \sig{\ii} &= \sig{i+1}\sig{\ii} \sig{i+1} & & \text{for $\ii=1,\dots,\nn-2$} \tag{BR1}\label{eq:br1}\\
\sig{\ii}  \sig{\jj} &= \sig{\jj} \sig{\ii} & & \text{for $\lvert \ii - \jj \rvert \geq 2$} \tag{BR2}\label{eq:br2}\\
\rr\ii \rr{i+1} \rr\ii &= \rr{i+1} \rr\ii\rr{i+1} & & \text{for  $\ii=1,\dots,\nn-2$} \tag{SR1}\label{eq:sr1}\\
\rr\ii \rr\jj &= \rr\jj \rr\ii & & \text{for $\lvert \ii - \jj \rvert \geq 2$}\tag{SR2}\\
\rrq\ii2 &= 1 & & \text{for $\ii=1,\dots,\nno$} \tag{SR3}\label{eq:sr3}\\ 
\sig{i}  \rr{j} &= \rr{j} \sig{i} & & \text{for $\lvert \ii - \jj \rvert \geq 2$}\tag{MR1}\label{eq:mr1}\\
\rr{i}  \rr{i+1} \sig{i} &= \sig{i+1}  \rr{i}   \rr{i+1} & & \text{for $\ii=1,\dots,\nn-2$.} \tag{MR2}\label{eq:mr2}
\end{align*}
\end{defin}

A diagrammatic description of generators and relations of $VB_n$ may be found for instance in~\cite{Bar-0,BP,Kam}. For a topological interpretation, we refer the reader to~\cite{Cis}, and for an algebraic one (in terms of actions on root systems) to~\cite{BPT}. Note that the relations~(\ref{eq:br1})--(\ref{eq:br2}) (resp.\ (\ref{eq:sr1})--(\ref{eq:sr3})) correspond to the usual relations of the Artin braid group $B_{n}$ (resp.\ the symmetric group $S_{n}$) for the set $\{\sig{1}, \ldots, \sig{n-1} \}$ (resp.\ for the set $\{\rr{1}, \ldots, \rr{n-1}\}$), and the remaining relations~(\ref{eq:mr1})--(\ref{eq:mr2}) are `mixed' in the sense that they involve generators of both of these sets.

Recall that the pure braid group $P_n$ is the kernel of the homomorphism $\pi\colon\thinspace B_n\longrightarrow S_n$ defined on the generators $\sig{1}, \ldots, \sig{n-1}$ of $B_{n}$ by $\pi(\sigma_i)=s_i$ for all $i=1,\ldots n-1$, where $s_i$ is the transposition $(i,i+1)$. Analogously, we define the \emph{virtual pure braid group}, denoted by $VP_\nn$, to be the kernel of the homomorphism $\pi_{VP}\colon\thinspace VB_\nn \longrightarrow S_\nn$ that for all $\ii = 1, 2, \dots, n-1$, maps the generators $\sig{\ii}$ and $\rr\ii$ to $s_\ii$. A group presentation for $VP_\nn$ is given in~\cite{Bar-0}. 
Let $\iota\colon\thinspace S_n \longrightarrow VB_n$ be the homomorphism defined by $\iota(s_i) = \rr{i}$ for $i=1, \ldots, n-1$. Since $\iota$ is a section for $\pi_{VP}$, it follows that $\iota$ is injective and that $VB_n$ is a semi-direct product of $VP_n$ by $S_n$. The canonical homomorphism $\eta\colon\thinspace B_n \longrightarrow VB_n$ defined by $\eta(\sigma_i) = \sigma_i$ for $i=1, \ldots, n-1$ is injective~\cite{BP,G,Kam}.  

The welded braid group $WB_n$ may be defined as a quotient of $VB_n$ by adding the following family of relations to the presentation  given in Definition~\ref{def:vbn}:
\begin{equation}
\rr\ii \sig{\ii+1} \sig\ii =  \sig{\ii+1} \sig\ii \rr{\ii+1},\ \text{for $\ii = 1, \dots, \nn-2$,} \tag{\text{OC}}\label{eq:oc}
\end{equation}
where OC stands for `Over Commuting'. Welded braid groups have several different equivalent definitions~\cite{D}. In particular, they may be defined as \emph{basis conjugating automorphisms} of free groups \cite{FRR}, from which it follows that the homomorphism $\eta\colon\thinspace B_n \longrightarrow WB_n$ defined by $\eta(\sigma_i) = \sigma_i$ for $i=1, \ldots, n-1$ is injective~\cite{Kam}. As in the case of virtual braid groups, the \emph{welded pure braid group}, denoted by $WP_\nn$, is defined to be the kernel of the homomorphism $\pi_{WP}\colon\thinspace WB_\nn \longrightarrow S_\nn$ given by sending the generators $\sig{\ii}$ and $\rr\ii$ of $WB_\nn$ to $s_\ii$ for all $\ii = 1, 2, \dots, n-1$. A  group presentation for $WP_\nn$ may be found in~\cite{BH,D,Su}. Abusing notation, we define $\iota\colon\thinspace S_n \longrightarrow WB_n$ to be also the homomorphism defined by $\iota(s_i) = \rr{i}$ for $i=1, \ldots, n-1$. As in the virtual case, $\iota$ is a section for $\pi_{WP}$, and therefore $\iota$ is injective and $WB_n$ is a semi-direct product of $WP_n$ by $S_n$. Note that the symmetrical relations:
\begin{equation}
\rr{\ii+1} \sig{\ii} \sig{\ii+1} =  \sig\ii \sig{\ii+1}  \rr{\ii},\ \text{for $\ii = 1, \dots, \nn-2$},  \tag{\text{UC}}\label{eq:uc}
\end{equation}
where UC stands for `Under Commuting', do not hold in~$\WB\nn$ (see for instance~\cite{BS,Kam}). Consequently, the following group, which was first defined by Kauffman and Lambropoulou in~\cite{KaL}, is a proper quotient of $WB_n$. 

\begin{defin}\label{D:Unrestricted}
The \emph{unrestricted virtual braid group} $UVB_n$ is the group defined by the following group presentation:

\noindent
Generators: $\{\sig{1}, \ldots, \sig{n-1}, \rr{1}, \ldots, \rr{n-1}\}$

\noindent
Relations: the seven relations~(\ref{eq:br1})--(\ref{eq:br2}), (\ref{eq:sr1})--(\ref{eq:sr3}) and~(\ref{eq:mr1})--(\ref{eq:mr2}) of  Definition~\ref{def:vbn}, plus the relations of types~(\ref{eq:oc}) and~(\ref{eq:uc}).
 \end{defin}

As in the case of $VB_n$, let $\pi_{UVP} \colon\thinspace UVB_n\longrightarrow S_n$ be the homomorphism defined by $\pi_{UVP}(\sigma_i)=\pi_{UVP}(\rho_i)=s_i$ for $i=1,\ldots,n-1$. The kernel of $\pi_{UVP}$ is the \emph{unrestricted virtual pure braid group}, denoted by $UVP_n$. For $1\leq i,j\leq n$, $i\neq j$, we define the elements $\lambda_{i,j}$ of $UVP_n$ as follows:
\begin{equation}\label{eq:lambda}
\begin{cases}
\lambda_{i,i+1}=\rho_i\sigma_i^{-1} & \text{for $i=1,\ldots,n-1$}\\
\lambda_{i+1,i}=\sigma_i^{-1}\rho_i & \text{for $i=1,\ldots,n-1$}\\
\lambda_{i,j}=\rho_{j-1}\rho_{j-2}\cdots\rho_{i+1}\lambda_{i,i+1} \rho_{i+1}\cdots\rho_{j-2} \rho_{j-1}& \text{for $1\leq i<j-1\leq n-1$}\\
\lambda_{j,i}=\rho_{j-1}\rho_{j-2}\cdots\rho_{i+1}\lambda_{i+1,i}\rho_{i+1}\cdots\rho_{j-2}\rho_{j-1}& \text{for $1\leq i<j-1\leq n-1$.}
\end{cases}
\end{equation}

\begin{thm}[Bardakov--Bellingeri--Damiani~\cite{BBD}]\label{thm:tss}
The group $UVP_n$ admits the following presentation:\\
Generators: $\lambda_{i,j}$, where $1\leq i,j\leq n$, $i\neq j$.\\
Relations: The generators commute pairwise except for the pairs $\lambda_{i,j}$ and $\lambda_{j,i}$ for all $1\leq i,j\leq n$, $i\neq j$.
\end{thm}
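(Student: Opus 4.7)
The plan is to apply the Reidemeister--Schreier rewriting procedure to the presentation of $UVB_n$ given in Definition~\ref{D:Unrestricted}, using the Schreier transversal $T = \iota(S_n) \subset UVB_n$ obtained from the section $\iota \colon S_n \to UVB_n$ of $\pi_{UVP}$. For each $s\in S_n$, after fixing a reduced expression $s=s_{i_1}\cdots s_{i_k}$, set $\rho_s=\rho_{i_1}\cdots\rho_{i_k}$; since the $\rho_i$'s satisfy the same relations as the $s_i$'s, namely \eqref{eq:sr1}--\eqref{eq:sr3}, the assignment $s\mapsto \rho_s$ is well-defined. With this transversal, the Reidemeister--Schreier generators of $UVP_n$ coming from the $\rho_i$'s are trivial, and those coming from the $\sigma_i$'s take the form $\rho_s\sigma_i\rho_{ss_i}^{-1}$ for $s\in S_n$ and $i\in\{1,\dots,n-1\}$.

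I would first check that these Reidemeister--Schreier generators form precisely the family $\{\lambda_{i,j}:1\leq i\neq j\leq n\}$. Indeed, the recursive definitions in~\eqref{eq:lambda} exhibit each $\lambda_{i,j}$ as a product of the form $\rho_s\sigma_k^{\pm 1}\rho_{ss_k}^{-1}$, where $s$ sends $\{k,k+1\}$ to $\{i,j\}$ in a prescribed order, so each $\lambda_{i,j}$ is a Reidemeister--Schreier generator, and conversely every such generator reduces to some $\lambda_{i,j}$ after absorbing the $S_n$-relations. I would then rewrite each defining relation of $UVB_n$ in terms of the $\lambda_{i,j}$'s. The braid relations \eqref{eq:br1}--\eqref{eq:br2} and the mixed relation \eqref{eq:mr2}, when conjugated by various $\rho_s$, yield the commutations $\lambda_{i,j}\lambda_{k,\ell}=\lambda_{k,\ell}\lambda_{i,j}$ for $\{i,j\}\cap\{k,\ell\}=\emptyset$, while \eqref{eq:mr1} produces no new relation. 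The key step is to verify that \eqref{eq:oc} and \eqref{eq:uc}, after translation, produce respectively the commutations $\lambda_{i,j}\lambda_{i,k}=\lambda_{i,k}\lambda_{i,j}$ and $\lambda_{j,i}\lambda_{k,i}=\lambda_{k,i}\lambda_{j,i}$ for distinct $i,j,k$. Together these exhaust all commutations between $\lambda_{i,j}$ and $\lambda_{k,\ell}$ with $\{i,j\}\neq\{k,\ell\}$, and no relation is imposed on pairs $\lambda_{i,j},\lambda_{j,i}$.

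The main obstacle is the combinatorial bookkeeping: since $\lambda_{i,j}$ with $|i-j|>1$ is defined as an iterated conjugate, explicitly expanding and verifying the translation of \eqref{eq:oc} and \eqref{eq:uc} for general indices $i,j,k$ demands careful manipulation using \eqref{eq:mr2}, \eqref{eq:br1}, and the symmetric group relations. A cleaner route would start from a known presentation of $WP_n$ of McCool type with the $\lambda_{i,j}$'s as generators and a strictly larger relation set, and then show that imposing \eqref{eq:uc} collapses the extra relations precisely to the missing commutations listed above; this would isolate the delicate verification to a single family of relations rather than running the full rewriting procedure.
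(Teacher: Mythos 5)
The paper does not actually prove Theorem~\ref{thm:tss}: it is imported verbatim from~\cite{BBD} and used as a black box, so there is no internal argument to compare yours against. Your plan --- Reidemeister--Schreier applied to the presentation of $UVB_n$ with the Schreier transversal $\iota(S_n)=\{\rho_s\}_{s\in S_n}$ --- is essentially the route taken in the cited source (which builds on Bardakov's analogous computation for $VP_n$ and the McCool presentation of $WP_n$), and the identification of the non-trivial rewritten generators $\rho_s\sigma_i\rho_{ss_i}^{-1}=\rho_s(\sigma_i\rho_i)\rho_s^{-1}=\lambda_{s(i),s(i+1)}^{-1}$ with the family $\{\lambda_{i,j}\}$ is correct, granted the conjugation formula~(\ref{eq:conjuvpn}), which itself requires a well-definedness check against the choice of word for $s$.

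There are, however, two genuine problems with the proposal as written. First, the attribution of relations is wrong in a way that matters: \eqref{eq:br2} and \eqref{eq:mr1} do rewrite to the commutations $[\lambda_{i,j},\lambda_{k,l}]=1$ for $\{i,j\}\cap\{k,l\}=\emptyset$, but \eqref{eq:br1} together with \eqref{eq:mr2} does \emph{not} yield disjoint commutations; it yields the three-index relations $\lambda_{k,i}(\lambda_{k,j}\lambda_{i,j})=(\lambda_{i,j}\lambda_{k,j})\lambda_{k,i}$ of Bardakov's presentation of $VP_n$. These are not among the relations in the statement of Theorem~\ref{thm:tss}, so your argument must additionally show they become redundant once the commutations coming from \eqref{eq:oc} and \eqref{eq:uc} are imposed (they do: with all pairs $\{i,j\}\neq\{k,l\}$ commuting, both sides reduce to $\lambda_{k,j}\lambda_{i,j}\lambda_{k,i}$), and this step is absent. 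Second, the entire content of a presentation theorem is the ``combinatorial bookkeeping'' you explicitly defer --- expanding the iterated-conjugate definitions of $\lambda_{i,j}$ for $|i-j|>1$ and verifying that \eqref{eq:oc} and \eqref{eq:uc} rewrite exactly to the claimed commutations and to nothing more. Without that verification the proposal is a plausible outline rather than a proof; your suggested shortcut via the known McCool-type presentation of $WP_n$ plus the \eqref{eq:uc} relations is indeed the cleaner way to localise that work, and is closest to what~\cite{BBD} actually does.
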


It follows from Theorem~\ref{thm:tss} that the group $UVP_n$ is a right-angled Artin group that is isomorphic to the direct product of the ${n(n-1)/2}$ free groups $F_{i,j}$ of rank $2$, where $1\leq i<j\leq n$, and where $\{ \lambda_{i,j}, \lambda_{j,i} \}$ is a basis of $F_{i,j}$. By convention, if $1\leq i,j\leq n$ then we set $F_{j,i}=F_{i,j}$. As for $VB_n$, let $\iota\colon\thinspace S_n \longrightarrow UVB_n$  be the homomorphism defined by $\iota(s_i) = \rr{i}$ for $i=1, \ldots, n-1$. Then $\iota$ is injective, and is a section for $\pi_{UVP}$. This leads to the following natural decomposition of $UVB_n$.

\begin{thm}[Bardakov--Bellingeri--Damiani~\cite{BBD}]\label{thm:tss2}
The group $UVB_n$ is isomorphic to the semi-direct product $UVP_n\rtimes S_n$, where $S_n$ acts on $UVP_n$ by permuting the indices of the elements of the generating set of $UVP_n$ given in Theorem~\ref{thm:tss}. More precisely, for all $\lambda_{i,j} \in UVP_n$, where $1\leq i, j \leq n$, $i\neq j$, and for all $s\in S_n$ we have:
\begin{equation}\label{eq:conjuvpn}
\iota(s)\,\lambda_{i,j}\,\iota(s)^{-1}=\lambda_{s(i),s(j)}.
\end{equation}
\end{thm}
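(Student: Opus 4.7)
The plan is to produce the decomposition by splitting the short exact sequence
\[
1 \to UVP_n \to UVB_n \stackrel{\pi_{UVP}}{\to} S_n \to 1,
\]
and then to verify the action formula on Coxeter generators of $S_n$. First I would observe that the generators $\rho_1, \ldots, \rho_{n-1}$ in $UVB_n$ satisfy the Coxeter relations of $S_n$, namely (\ref{eq:sr1}), (SR2), and (\ref{eq:sr3}). Hence the assignment $s_i \mapsto \rho_i$ extends to a well-defined homomorphism $\iota\colon\thinspace S_n \to UVB_n$, and checking on generators gives $\pi_{UVP}\circ \iota = \mathrm{id}_{S_n}$, so $\iota$ is a section and in particular is injective. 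The sequence therefore splits, yielding $UVB_n \cong UVP_n \rtimes_\alpha S_n$ for the conjugation action $\alpha(s)(x)=\iota(s)\,x\,\iota(s)^{-1}$.

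To identify $\alpha$, it suffices to verify $\rho_k\lambda_{i,j}\rho_k^{-1}=\lambda_{s_k(i),s_k(j)}$ for every Coxeter generator $s_k$ and every pair $i\neq j$, since an induction on the length of $s\in S_n$ in $\{s_1,\ldots,s_{n-1}\}$ then yields (\ref{eq:conjuvpn}) in full. I would proceed by a case analysis on the position of $\{k,k+1\}$ relative to $\{i,i+1,\ldots,j\}$ (assuming $i<j$; the case $i>j$ is symmetric). When $\{k,k+1\}$ is disjoint from the indices appearing in the defining expression (\ref{eq:lambda}) of $\lambda_{i,j}$, the identity is immediate from (\ref{eq:mr1}) together with the commutation of non-adjacent $\rho$'s. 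The remaining cases reduce to base cases with $|i-j|=1$, via the recursion $\lambda_{i,j}=\rho_{j-1}\lambda_{i,j-1}\rho_{j-1}$ for $i<j-1$ (and its analogue for $\lambda_{j,i}$) combined with (\ref{eq:sr1}) to commute $\rho_k$ past the conjugating $\rho$-word.

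The main obstacle is the handful of base cases with $|i-j|=1$ and $k\in\{i-1,i,j-1,j\}$. Some are immediate, for instance
\[
\rho_k\lambda_{k,k+1}\rho_k = \rho_k(\rho_k\sigma_k^{-1})\rho_k = \sigma_k^{-1}\rho_k = \lambda_{k+1,k},
\]
which matches $\lambda_{s_k(k),s_k(k+1)}$. The genuinely nontrivial case is $\rho_{k-1}\lambda_{k,k+1}\rho_{k-1}$, which must equal $\lambda_{k-1,k+1}=\rho_k\rho_{k-1}\sigma_{k-1}^{-1}\rho_k$; here I would use (\ref{eq:mr2}) in the rearranged form $\sigma_k^{-1}\rho_{k-1}\rho_k=\rho_{k-1}\rho_k\sigma_{k-1}^{-1}$, together with the braid relation (\ref{eq:sr1}) and (\ref{eq:sr3}) to collapse the resulting $\rho$-word, rewriting $\rho_{k-1}\rho_k\sigma_k^{-1}\rho_{k-1}$ as $\rho_k\rho_{k-1}\sigma_{k-1}^{-1}\rho_k$. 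The analogous base-case variants ($\rho_{k+1}\lambda_{k,k+1}\rho_{k+1}$ and those for $\lambda_{k+1,k}$) are symmetric. Note that relations (\ref{eq:oc}) and (\ref{eq:uc}) are not required for the action formula itself, although they are essential for the underlying RAAG presentation of $UVP_n$ given by Theorem~\ref{thm:tss}.
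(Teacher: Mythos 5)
Your proposal is correct. Note that the paper itself gives no proof of this statement: it is imported verbatim from Bardakov--Bellingeri--Damiani~\cite{BBD}, so there is no in-paper argument to compare against; your direct verification is the natural one and is essentially what the cited source does (splitting the sequence via the Coxeter presentation of $S_n$ on the $\rho_i$, then checking the conjugation formula on generators). The two points worth flagging both check out: the identity $\rho_{k-1}\rho_k\sigma_k^{-1}\rho_{k-1}=\rho_k\rho_{k-1}\sigma_{k-1}^{-1}\rho_k$ follows, as you say, by inserting $\rho_k\rho_k$, applying the rearranged form $\sigma_k^{-1}\rho_{k-1}\rho_k=\rho_{k-1}\rho_k\sigma_{k-1}^{-1}$ of~(\ref{eq:mr2}), and collapsing $\rho_{k-1}\rho_k\rho_{k-1}\rho_k$ to $\rho_k\rho_{k-1}$ via~(\ref{eq:sr1}) and~(\ref{eq:sr3}); and your observation that (\ref{eq:oc}) and~(\ref{eq:uc}) enter only through the presentation of $UVP_n$ in Theorem~\ref{thm:tss}, not through the action formula (which already holds in $VB_n$), is accurate. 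The only cosmetic gap is that the intermediate cases ($i<k<j-1$, and $k\in\{i-1,j\}$) are asserted rather than written out, but each does reduce by the braid relations exactly as you describe.
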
 

\section{Crystallographic groups}\label{sec:cryst}

As we mentioned in Section~\ref{sec:uvb}, the Artin braid group $B_n$ embeds naturally in $VB_n$ and $WB_n$ via the injective homomorphism $\eta$ defined in each of these two cases. However this is no longer the case for the canonical homomorphism $\eta\colon\thinspace B_n \longrightarrow UVB_n$ defined by $\eta(\sigma_i) = \sigma_i$ for $i=1, \ldots, n-1$. In this section, we show that the image of $B_n$ under the homomorphism  $\eta$ is isomorphic to the quotient of $B_n$ by the commutator subgroup $[P_n, P_n]$. This quotient was introduced by Tits in~\cite{Ti} as \emph{groupes de Coxeter \'etendus}, and has been studied more recently by various authors (see for example~\cite{BM,Di,GGO17,GGO19,GGO20,M16}).

\begin{prop} \label{prop:cris}
Let $\eta\colon\thinspace B_n\longrightarrow UVB_n$ be the canonical homomorphism defined by $\eta(\sigma_i)=\sigma_i$ for $1\leq i\leq n-1$. Then $\eta(B_n)$ is isomorphic to the group $B_n/[P_n, P_n]$.
\end{prop}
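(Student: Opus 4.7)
The strategy is to show that $\ker(\eta) = [P_n, P_n]$, from which $\eta(B_n) \cong B_n/[P_n, P_n]$ follows. This will be established in two stages: first, the inclusion $[P_n, P_n] \subseteq \ker(\eta)$, which amounts to $\eta(P_n)$ being abelian; second, injectivity of the induced homomorphism $\bar{\eta}\colon\thinspace B_n/[P_n, P_n] \longrightarrow UVB_n$, which I would establish through a five-lemma argument making crucial use of Theorems~\ref{thm:tss} and~\ref{thm:tss2}.

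For the first stage, I would compute the image under $\eta$ of the standard pure braid generators $A_{i,j} = \sigma_{j-1}\cdots\sigma_{i+1}\sigma_i^2\sigma_{i+1}^{-1}\cdots\sigma_{j-1}^{-1}$ for $1 \leq i < j \leq n$. Working inside the semi-direct product $UVB_n = UVP_n \rtimes S_n$, the definitions in~(\ref{eq:lambda}) give $\eta(\sigma_k) = \lambda_{k,k+1}^{-1} \rho_k$. A short calculation using the action~(\ref{eq:conjuvpn}) yields $\eta(\sigma_i^2) = \lambda_{i,i+1}^{-1}\lambda_{i+1,i}^{-1}$; conjugating by $\eta(\sigma_{j-1}\cdots\sigma_{i+1})$ and applying~(\ref{eq:conjuvpn}) together with the observation that the underlying permutation $s_{j-1}\cdots s_{i+1}$ fixes $i$ and sends $i+1$ to $j$, I expect to obtain $\eta(A_{i,j}) = u\, \lambda_{i,j}^{-1}\lambda_{j,i}^{-1}\, u^{-1}$ for some $u \in UVP_n$. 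Decomposing $u$ according to the direct product $UVP_n = \prod_{k < l} F_{k,l}$ of Theorem~\ref{thm:tss}, and noting that all factors other than $F_{i,j}$ commute with $\lambda_{i,j}$ and $\lambda_{j,i}$, one concludes that $\eta(A_{i,j})$ lies in $F_{i,j}$ and is nontrivial there. Since $F_{i,j}$ and $F_{k,l}$ commute pointwise whenever $\{i,j\} \neq \{k,l\}$, the generators $\eta(A_{i,j})$ pairwise commute, so $\eta(P_n)$ is abelian and $\eta$ descends to $\bar{\eta}$.

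For the injectivity of $\bar{\eta}$, I would compare the two short exact sequences $1 \to P_n/[P_n,P_n] \to B_n/[P_n,P_n] \to S_n \to 1$ and $1 \to UVP_n \to UVB_n \to S_n \to 1$, linked by the restriction $\bar\eta|_{P_n/[P_n,P_n]}$ on the left, by $\bar\eta$ in the middle, and by the identity on $S_n$ on the right. By the five lemma, it suffices to show that $\bar{\eta}|_{P_n/[P_n,P_n]}\colon\thinspace \mathbb{Z}^{n(n-1)/2} \longrightarrow UVP_n$ is injective. This is immediate from the previous computation: the free generators $[A_{i,j}]$ of $P_n/[P_n,P_n]$ are sent to nontrivial elements sitting in pairwise distinct direct factors $F_{i,j} \cong F_2$ of the torsion-free group $UVP_n$, so no nontrivial integer combination of them can vanish. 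The main technical obstacle is the semi-direct-product calculation placing $\eta(A_{i,j})$ inside $F_{i,j}$; once that is in hand, the rest of the proof is essentially formal.
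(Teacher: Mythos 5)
Your proof is correct and follows essentially the same route as the paper's: both reduce the problem to the restriction of $\eta$ to $P_n$ by comparing the two extensions over $S_n$ (your five-lemma step is the paper's diagram chase in~(\ref{eq:cdeta})), and both then identify $\ker(\eta\left\lvert_{P_n}\right.)$ with $[P_n,P_n]$ via the computation that $\eta$ sends the standard pure braid generators to nontrivial elements of the pairwise-commuting free factors $F_{i,j}$ of $UVP_n$. The only cosmetic difference is that the paper quotes the exact formula $\eta(a_{i,j})=\lambda_{i,j}^{-1}\lambda_{j,i}^{-1}$ from~\cite{BBD}, whereas you rederive a conjugate of it, which suffices for your argument.
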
 

\begin{proof}
Consider the homomorphism $\eta\colon\thinspace B_n\longrightarrow UVB_n$. Since $P_n=\ker(\pi)$ (resp.\ $UVP_n=\ker(\pi_{UVP})$), where $\pi\colon\thinspace B_n\longrightarrow S_n$ (resp.\ $\pi_{UVP}\colon\thinspace UVB_n \longrightarrow S_n$) is defined by $\pi(\sigma_i)=s_i$ (resp.\ $\pi_{UVP}(\sigma_i)= \pi_{UVP}(\rho_i)=s_i$) for all $1\leq i\leq n-1$, it follows from the definition of 
$\eta\colon\thinspace B_n\longrightarrow UVB_n$ that $\pi=\pi_{UVP} \circ \eta$, and thus $\pi=\pi_{UVP}\left\lvert_{\eta(B_{n})}\right. \circ \eta$. Since $\pi$ is surjective,  the homomorphism $\pi_{UVP}\left\lvert_{\eta(B_{n})}\right.\colon\thinspace \eta(B_{n})\longrightarrow S_{n}$ is too. From the equality $\pi=\pi_{UVP} \circ \eta$, we obtain the following commutative diagram of short exact sequences:
\begin{equation}\label{eq:cd1}
\begin{tikzcd}
1 \arrow[r] & P_{n} \arrow[r] \arrow[d, "\eta\left\lvert_{P_n}\right."] & B_{n} \arrow[r, "\pi"] \arrow[d, "\eta"] & S_{n} \arrow[r] \ar[equal]{d} & 1\\
1 \arrow[r] & UVP_{n} \arrow[r] & UVB_{n} \arrow[r, "\pi_{UVP}"] & S_{n} \arrow[r] & 1.
\end{tikzcd}
\end{equation}
We claim that $\eta(P_{n})=\ker(\pi_{UVP}\left\lvert_{\eta(B_{n})}\right.)$. To prove the claim, the exactness of~(\ref{eq:cd1}) implies that $\eta(P_{n}) \subset \ker(\pi_{UVP}\left\lvert_{\eta(B_{n})}\right.)$. Conversely, let $x\in \ker(\pi_{UVP}\left\lvert_{\eta(B_{n})}\right.)$. Then $x\in \eta(B_{n})$, and there exists $y\in B_{n}$ such that $\eta(y)=x$. The commutativity of~(\ref{eq:cd1}) implies that $y\in P_{n}$, and so $x\in \eta(P_{n})$, so $\ker(\pi_{UVP}\left\lvert_{\eta(B_{n})}\right.) \subset \eta(P_{n})$, and the claim follows.

Considering $\eta$ and its restriction to $P_{n}$, we obtain the following commutative diagram of short exact sequences:
\begin{equation}\label{eq:cdeta}
\begin{tikzcd}
 &&1 \arrow[d] & 1 \arrow[d]\\
1 \arrow[r] & \ker(\eta\left\lvert_{P_n}\right.) \arrow[d] \arrow[r] & P_n \arrow[d] \arrow[r, "\eta\left\lvert_{P_n}\right."] & \eta(P_n)\arrow[r] \arrow[d] \arrow[r] & 1 \\
1 \arrow[r] & \ker(\eta)  \arrow[r] &B_n \arrow[d, "\pi"]\arrow[r, "\eta"] & \eta(B_n)\arrow[d, "\pi_{UVP}\left\lvert_{\eta(B_{n})}\right."]\ar[r] & 1.\\
&&S_n  \arrow[d] \ar[equal]{r} & S_n  \arrow[d]\\
&&1 & 1
\end{tikzcd}
\end{equation}
Note that the exactness of the rightmost column of~(\ref{eq:cdeta}) follows from the claim of the previous paragraph.
By a standard diagram-chasing argument, we conclude from~(\ref{eq:cdeta}) that $\ker(\eta)= \ker(\eta\left\lvert_{ P_n}\right.)$. 
%we easily obtain that $\ker(\eta)\subseteq \ker(\eta\left\lvert_{ P_n}\right.)$. Clearly, we also have that $\ker(\eta\left\lvert_{ P_n}\right.)\subseteq \ker(\eta)$ and therefore we conclude that $\ker(\eta)= \ker(\eta\left\lvert_{ P_n}\right.)$. 
Applying the isomorphism $UVB_n \cong UVP_n\rtimes S_n$ of Theorem~\ref{thm:tss2} and using equation~(\ref{eq:lambda}), we see that  $\eta(\sigma_i)=\lambda_{\ii, \ii+1}\inv \rho_{i}$. Recall that the pure braid group $P_n$ is generated by the set $\{a_{ij} \mid 1 \leq i < j \leq \nn\}$, where:
\begin{gather*}
a_{\ii,\ii+1} = \sigg\ii{2}, \\
a_{\ii, \jj} = \sig{\jj-1} \sig{\jj-2} \cdots \sig{\ii+1} \sigg\ii{2} \siginv{\ii+1} \cdots \siginv{\jj-2} \siginv{\jj-1} \mbox{,} \quad  \mbox{for } \ii+1 < \jj \leq \nn.
\end{gather*}

In~\cite[Corollary 2.8]{BBD} it was shown that %By~\cite[page~6]{Bar-0}, the generators $a_{\ii, \jj}$ of $\PB\nn$ may be rewritten as
%\begin{gather*}
%a_{\ii, \ii+1} = \lambda_{\ii, \ii+1}\inv \lambda_{\ii+1, \ii}\inv \mbox{,} \quad \mbox{for } \ii=1, \dots, \nno \mbox{;} \\
%a_{\ii, \jj}= \lambda_{\jj-1, \jj}\inv \lambda_{\jj-2, \jj}\inv \cdots \lambda_{\ii+1, \jj}\inv ( \lambda_{\ii, \jj}\inv  \lambda_{\jj, \ii}\inv) \lambda_{\ii+1, \jj} \cdots \lambda_{\jj-2, \jj}  \lambda_{\jj-1, \jj} \mbox{,} \quad  \mbox{for } 2 \leq \ii+1 < \jj \leq \nn,
%\end{gather*}
%and then it follows that 
$\eta(a_{\ii, \jj})=\lambda_{\ii, \jj}^{-1} \lambda_{\jj, \ii}^{-1}$  for $1 \leq \ii+1 < \jj \leq \nn$. From the construction, we deduce that $[P_n, P_n]$ is contained in $\ker(\eta\left\lvert_{ P_n}\right.)$. Further, $\eta(P_n)$ is isomorphic to $P_n/[P_n,P_n]=\Z^{n(n-1)/2}$ (see~\cite[Corollary 2.8]{BBD}). It follows that  $[P_n, P_n]$ coincides with $\ker(\eta\left\lvert_{ P_n}\right.)$. The statement follows by combining this with the fact that $\ker(\eta)= \ker(\eta\left\lvert_{ P_n}\right.)$ and using the exactness of~(\ref{eq:cdeta}).
\end{proof}

\begin{rem}
We can rephrase Proposition~\ref{prop:cris} by saying that the canonical homomorphism $\eta\colon\thinspace B_n \to UVB_n$
factors through an embedding of $B_n/[P_n, P_n]$ into $UVB_n$.
On the other hand, by~\cite[Theorem~2.5]{CO}, the canonical homomorphism from $B_n$ to $VB_n$ induces an embedding of $B_n/[P_n, P_n]$ in $VB_n/[VP_n,VP_n]$. Since $VB_n/[VP_n,VP_n]$ is isomorphic to $UVB_n/[UVP_n,UVP_n]$~\cite[Theorem~4.1]{CO}, the canonical homomorphism  $B_n$ to $UVB_n$ gives rise to an embedding of the quotient $B_n/[P_n, P_n]$ in $UVB_n/[UVP_n,UVP_n]$. We therefore conclude that the composition of the embedding of $B_n/[P_n, P_n]$ in $UVB_n$ defined in the proof of Proposition~\ref{prop:cris} and the canonical projection $q_n\colon\thinspace UVB_n \to UVB_n/[UVP_n,UVP_n]$ is also injective. 
\end{rem}
 
We recall that an abstract group $\Gamma$ is said to be \emph{crystallographic} if it can be realised as an extension of a free Abelian subgroup of $\Gamma$ of maximal rank by a finite group~\cite[Theorem 2.1.4]{Dek}. Using the following result, in~\cite{GGO17}, Gon\c{c}alves--Guaschi--Ocampo proved that the group $B_n/[P_n, P_n]$ is a crystallographic group. 
 
\begin{prop}[Gon\c{c}alves--Guaschi--Ocampo~\cite{GGO17}]\label{prop:ggo}
Let $n\geq 2$. Then the following sequence:
\begin{equation}\label{eq:cryst}
\begin{tikzcd}
1 \arrow[r] & \mathbb{Z}^{n(n-1)/2} \arrow[r] &B_n/[P_n, P_n] \arrow[r, "\widehat{\pi}"] & S_n \ar[r] & 1,
\end{tikzcd}
\end{equation}
is short exact, where $\widehat{\pi}$ is the homomorphism induced by $\pi\colon\thinspace B_n\longrightarrow S_n$.
\end{prop}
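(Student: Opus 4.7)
The plan is to derive the exactness of~(\ref{eq:cryst}) by factoring the classical short exact sequence $1 \to P_n \to B_n \to S_n \to 1$ through the commutator subgroup $[P_n,P_n]$. Since $[P_n,P_n]$ is characteristic in $P_n$ and $P_n$ is normal in $B_n$ (as the kernel of $\pi$), the subgroup $[P_n,P_n]$ is itself normal in $B_n$, so the quotient $B_n/[P_n,P_n]$ is well defined. As $[P_n,P_n] \subset P_n = \ker(\pi)$, the homomorphism $\pi$ descends to a surjective homomorphism $\widehat{\pi}\colon\thinspace B_n/[P_n,P_n] \longrightarrow S_n$, and a straightforward diagram chase (or the third isomorphism theorem) identifies $\ker(\widehat{\pi})$ with the image of $P_n$ in the quotient, namely $P_n/[P_n,P_n]$. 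This immediately yields a short exact sequence
\begin{equation*}
1 \longrightarrow P_n/[P_n,P_n] \longrightarrow B_n/[P_n,P_n] \stackrel{\widehat{\pi}}{\longrightarrow} S_n \longrightarrow 1.
\end{equation*}

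It then remains to identify $P_n/[P_n,P_n]$ with $\Z^{n(n-1)/2}$. I would invoke the classical Artin presentation of $P_n$ on the $n(n-1)/2$ generators $\{a_{i,j} \mid 1 \leq i < j \leq n\}$: the defining relations are all conjugation identities, so they become trivial upon abelianising, and hence $P_n^{\mathrm{ab}}$ is free abelian of rank $n(n-1)/2$, with basis the classes of the $a_{i,j}$. An alternative derivation uses the Fadell--Neuwirth split short exact sequence $1 \to F_{n-1} \to P_n \to P_{n-1} \to 1$: since the sequence splits, abelianising yields $P_n^{\mathrm{ab}} \cong \Z^{n-1} \oplus P_{n-1}^{\mathrm{ab}}$, and induction on $n$ (with base $P_2^{\mathrm{ab}} \cong \Z$) gives the result.

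The only genuinely non-formal step is the computation of $P_n^{\mathrm{ab}}$; once this is established, combining it with the sequence displayed above gives exactly~(\ref{eq:cryst}). The remaining verifications, \ie the normality of $[P_n,P_n]$ inside $B_n$, the well-definedness of $\widehat{\pi}$, and the identification of its kernel, are organisational and present no real difficulty.
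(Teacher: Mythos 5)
Your argument is correct. Note that the paper does not actually prove Proposition~\ref{prop:ggo}: it is imported as a known result from~\cite{GGO17}, so there is no in-paper proof to compare against; your write-up supplies the standard argument that the cited source uses, namely pushing the sequence $1\to P_n\to B_n\to S_n\to 1$ down to the quotient by the characteristic subgroup $[P_n,P_n]$ and identifying $P_n^{\mathrm{ab}}$ with $\Z^{n(n-1)/2}$ from the Artin presentation, whose relations are all conjugation identities and hence die under abelianisation. One small caveat on your alternative route: for a split extension $1\to F_{n-1}\to P_n\to P_{n-1}\to 1$, abelianisation gives $P_n^{\mathrm{ab}}\cong (H_1(F_{n-1}))_{P_{n-1}}\oplus P_{n-1}^{\mathrm{ab}}$, i.e.\ the \emph{coinvariants} of the $P_{n-1}$-action appear, not $H_1(F_{n-1})$ itself; you get $\Z^{n-1}$ in the first summand only because pure braids send each free generator to a conjugate of itself, so the induced action on $H_1(F_{n-1})$ is trivial. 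Splitness alone does not justify that step, though since your primary derivation via the presentation is complete, this does not affect the validity of the proof.
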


Note that the sequence~(\ref{eq:cryst}) does not split~\cite{Di,GGO17}. It follows from Propositions~\ref{prop:cris} and~\ref{prop:ggo} that the group $\eta(B_n)$ is a crystallographic subgroup of $UVB_n$.
%, and it is maximal for this property in the sense that the maximal rank of a free Abelian group in $UVB_n$ is exactly $n(n-1)/2$ \comj{this is because?}. 
We remark also that $\operatorname{\text{Im}}(\eta)$ is not normal in $UVB_n$, and that the normal closure of $\operatorname{\text{Im}}(\eta)$ is of index $2$ in $UVB_n$. This follows from the fact that in the quotient of $UVB_n$ by $\operatorname{\text{Im}}(\eta)$, the generators $\rho_i$ are  identified to a single element due to relations~(\ref{eq:oc}) and~(\ref{eq:uc}) of Definition~\ref{D:Unrestricted}. The group $UVB_n$ contains other  crystallographic groups of rank $n(n-1)/2$  that are not isomorphic to $\eta(B_n)$. One such example is given in the following proposition that is a straightforward consequence of Theorems \ref{thm:tss} and \ref{thm:tss2}

\begin{prop} \label{prop:cris2}
Let $C_n$ be the subgroup of $UVB_n$ generated by $\{ \lambda_{i,j} \lambda_{j,i}^{-1} \}$ for $1\leq i<j\leq n$ and by $\rho_i$ for $i=1, \ldots, n$, and let $H_n$ be the subgroup of $C_n$ generated by $\{  \lambda_{i,j} \lambda_{j,i}^{-1} \}$ for $1\leq i<j\leq n$.
The restriction of $\pi_{UVP}$ to $C_n$ defines the following split exact sequence: 
\begin{equation*}
\begin{tikzcd}
1 \arrow[r] & H_n \arrow[r] &C_n \arrow[r, "\pi_{UVP}{\big|}_{C_n}"] & S_n \ar[r] & 1,
\end{tikzcd}
\end{equation*}
In particular $C_n$ is crystallographic. 
 \end{prop}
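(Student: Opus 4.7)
The plan is to identify $C_n$ as an internal semi-direct product $H_n \rtimes \iota(S_n)$, from which the displayed short exact sequence and its splitting fall out at once, and then to deduce the crystallographic property by verifying faithfulness of the induced $S_n$-action on $H_n$. The key tools are Theorems~\ref{thm:tss} and~\ref{thm:tss2}: the former gives the decomposition of $UVP_n$ as a direct product of the free rank-two factors $F_{i,j}$, and the latter gives the conjugation action $\iota(s)\lambda_{i,j}\iota(s)^{-1} = \lambda_{s(i),s(j)}$.

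First I would show that $H_n \cong \mathbb{Z}^{n(n-1)/2}$: by Theorem~\ref{thm:tss}, for $i<j$ the element $\lambda_{i,j}\lambda_{j,i}^{-1}$ has infinite order in the free factor $F_{i,j}$, while the elements associated to distinct pairs lie in distinct factors and hence commute. Using~(\ref{eq:conjuvpn}), conjugation of $\lambda_{i,j}\lambda_{j,i}^{-1}$ by $\rho_k = \iota(s_k)$ yields $\lambda_{s_k(i),s_k(j)}\lambda_{s_k(j),s_k(i)}^{-1}$, which is either another generator of $H_n$ or its inverse; therefore $H_n$ is normalised by each $\rho_k$. Since $C_n$ is generated by $H_n$ together with the $\rho_k$, it follows that $C_n = H_n\cdot \iota(S_n)$, and $H_n\cap \iota(S_n)\subseteq UVP_n\cap \iota(S_n) = \{1\}$ because $\iota$ is a section of $\pi_{UVP}$. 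Hence $C_n = H_n\rtimes \iota(S_n)$, and restricting $\pi_{UVP}$ to $C_n$ produces the asserted split short exact sequence, with splitting $\iota|_{S_n}$.

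For the crystallographic conclusion, $H_n$ is normal, free abelian of rank $n(n-1)/2$, and of finite index in $C_n$, so it only remains to check that the induced conjugation action of $S_n$ on $H_n$ is faithful. If $s \in S_n$ acts trivially, then $\lambda_{s(i),s(j)}\lambda_{s(j),s(i)}^{-1} = \lambda_{i,j}\lambda_{j,i}^{-1}$ for every pair $i\neq j$. The direct-product decomposition of $UVP_n$ forces the two elements to lie in the same free factor, i.e.\ $\{s(i),s(j)\} = \{i,j\}$, after which the freeness of $F_{i,j}$ rules out the swap $s(i)=j,\ s(j)=i$ (which would instead yield the non-trivial element $(\lambda_{i,j}\lambda_{j,i}^{-1})^{-1}$). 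Letting $\{i,j\}$ range over all pairs gives $s = \id$.

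The only mildly delicate point in the whole argument is this last faithfulness verification, since every $s \neq \id$ has to be ruled out by exhibiting a pair on which it either moves the factor or produces an inverse; everything else is a direct bookkeeping consequence of Theorems~\ref{thm:tss} and~\ref{thm:tss2}.
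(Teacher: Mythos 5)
Your proof is correct and follows exactly the route the paper intends: Proposition~\ref{prop:cris2} is stated there without a written proof, as ``a straightforward consequence of Theorems~\ref{thm:tss} and~\ref{thm:tss2}'', and your argument is precisely that deduction spelled out (the internal semi-direct product $C_n=H_n\rtimes\iota(S_n)$ with $H_n\cong\Z^{n(n-1)/2}$ coming from the direct-product structure of $UVP_n$ and the permutation action of equation~(\ref{eq:conjuvpn})). Your explicit verification that the holonomy action is faithful is a welcome detail that the paper leaves implicit.
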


%Making use of the subgroup $H_n$, we may give an alternative \comj{if it is `alternative' then the result already exists somewhere. Give a reference for this? (perhaps Remark~\ref{rem:uvbnhn} could be adapted here?} proof of the fact that $B_n/[P_n, P_n]$ embeds in the semi-direct product $\mathbb{Z}^{n(n-1)/2} \rtimes S_n$.

\begin{prop} \label{prop:cris3}
Let $H_n$ be the subgroup of $UVB_n$ defined in the statement of  Proposition~\ref{prop:cris2}, let $\langle \! \langle H_n \rangle \! \rangle$ its normal closure in $UVB_n$, and let $\{x_{i,j}\}_{1\le i<j\le n}$ be a set of generators of $\mathbb{Z}^{n(n-1)/2}$.
\begin{enumerate}[(a)]
\item\label{it:cris3a} The quotient $UVB_n/\langle \! \langle H_n \rangle \! \rangle$ is isomorphic to the semi-direct product $\mathbb{Z}^{n(n-1)/2} \rtimes S_n$,
where for any $s \in S_n$, $s \cdot x_{i,j}= x_{s(i),s(j)}$, where we take $x_{i,j}=x_{j,i}$.

\item\label{it:cris3b} Let $\theta\colon\thinspace B_n \longrightarrow UVB_n/\langle \! \langle H_n \rangle \! \rangle$ be the composition of $ \eta: B_n \longrightarrow UVB_n$ and the projection $\pi_H\colon\thinspace UVB_n \longrightarrow UVB_n/\langle \! \langle H_n \rangle \! \rangle$. The group $\theta(B_n)$ is isomorphic to $B_n/[P_n, P_n]$.
 \end{enumerate}
 \end{prop}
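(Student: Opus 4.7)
My plan is to establish~\ref{it:cris3a} by analysing the normal closure $\langle\!\langle H_n\rangle\!\rangle$ through the semi-direct product decomposition $UVB_n\cong UVP_n\rtimes S_n$ of Theorem~\ref{thm:tss2}, and then to deduce~\ref{it:cris3b} by showing that $\ker(\theta)$ equals $[P_n,P_n]$ using Proposition~\ref{prop:cris}.

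For~\ref{it:cris3a}, since $H_n\subset UVP_n$ and $UVP_n$ is normal in $UVB_n$, we have $\langle\!\langle H_n\rangle\!\rangle\subset UVP_n$. The conjugation rule~(\ref{eq:conjuvpn}) sends each generator $\lambda_{i,j}\lambda_{j,i}^{-1}$ of $H_n$ to $\lambda_{s(i),s(j)}\lambda_{s(j),s(i)}^{-1}$, which up to inversion is again a generator, so conjugating by the $S_n$-factor preserves $H_n$. It follows that $\langle\!\langle H_n\rangle\!\rangle$ coincides with the normal closure of $H_n$ inside $UVP_n$. Using the direct product decomposition $UVP_n\cong\prod_{1\le i<j\le n}F_{i,j}$ of Theorem~\ref{thm:tss}, this normal closure splits factor by factor, and inside each $F_{i,j}=\langle\lambda_{i,j},\lambda_{j,i}\rangle$ the relation $\lambda_{i,j}=\lambda_{j,i}$ collapses the free group to $\mathbb{Z}$. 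Writing $x_{i,j}$ for the common image of $\lambda_{i,j}$ and $\lambda_{j,i}$, this yields $UVP_n/\langle\!\langle H_n\rangle\!\rangle\cong\mathbb{Z}^{n(n-1)/2}$ with induced $S_n$-action $s\cdot x_{i,j}=x_{s(i),s(j)}$, consistent with the convention $x_{i,j}=x_{j,i}$, and the description of $UVB_n/\langle\!\langle H_n\rangle\!\rangle$ follows at once.

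For~\ref{it:cris3b}, part~\ref{it:cris3a} allows us to factor $\pi_{UVP}$ as $\bar\pi\circ\pi_H$ for the canonical projection $\bar\pi\colon\thinspace\mathbb{Z}^{n(n-1)/2}\rtimes S_n\to S_n$, so $\pi=\bar\pi\circ\theta$ and hence $\ker(\theta)\subset P_n$. To evaluate $\theta$ on $P_n$, the identity $\eta(a_{i,j})=\lambda_{i,j}^{-1}\lambda_{j,i}^{-1}$ recalled in the proof of Proposition~\ref{prop:cris} gives $\theta(a_{i,j})=x_{i,j}^{-2}$ for all $1\le i<j\le n$. Since $\eta(P_n)\cong P_n/[P_n,P_n]$ is free abelian on the basis $\{\eta(a_{i,j})\}$, and $\pi_H$ sends this basis to the $\mathbb{Z}$-independent family $\{x_{i,j}^{-2}\}$ inside $\mathbb{Z}^{n(n-1)/2}$, the restriction $\pi_H|_{\eta(P_n)}$ is injective. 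Combined with $\ker(\eta|_{P_n})=[P_n,P_n]$, this gives $\ker(\theta)=\ker(\theta|_{P_n})=[P_n,P_n]$, whence $\theta(B_n)\cong B_n/[P_n,P_n]$.

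The main subtlety lies in~\ref{it:cris3a}, namely in checking that conjugation in $UVB_n$ does not enlarge the normal closure beyond the direct product of the per-factor normal closures; this rests on the $S_n$-action permuting the factors $F_{i,j}$ as blocks and on distinct factors of $UVP_n$ commuting. Once this is in place, the remainder is essentially bookkeeping with the semi-direct product structure and a direct comparison of ranks.
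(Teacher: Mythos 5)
Your proposal is correct and follows essentially the same route as the paper: part~(a) via the decomposition $UVB_n\cong UVP_n\rtimes S_n$ of Theorem~\ref{thm:tss2} together with the collapse of each free factor $F_{i,j}$ under the identification $\lambda_{i,j}=\lambda_{j,i}$, and part~(b) by adapting the argument of Proposition~\ref{prop:cris} to see that $\theta(P_n)$ is free Abelian of rank $n(n-1)/2$ and that $\ker(\theta)=\ker(\theta|_{P_n})=[P_n,P_n]$. Your write-up merely supplies details the paper leaves implicit (that the normal closure stays inside $UVP_n$ and splits factor by factor, and the explicit computation $\theta(a_{i,j})=x_{i,j}^{-2}$), so no substantive difference to report.
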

 
\begin{proof} 
Part~(\ref{it:cris3a}) follows from Theorems~\ref{thm:tss} and~\ref{thm:tss2} that imply that $UVB_n/\langle \! \langle H_n \rangle \! \rangle = UVP_n/\langle \! \langle H_n \rangle \! \rangle \rtimes S_n$. The identification $\lambda_{i,j} = \lambda_{j,i}$ for $i \not= j$ implies that $UVP_n/H_n=\mathbb{Z}^{n(n-1)/2} $, and it also defines the induced action of $S_n$ on this quotient. The proof of Proposition~\ref{prop:cris} can be adapted to prove part~(\ref{it:cris3b}) (the image of $P_n$ by $\theta$ is a free Abelian group of rank $n(n-1)/2$, and is thus isomorphic to $P_n/[P_n,P_n]$). 
\end{proof}

\begin{rem}\label{rem:uvbnhn}
The quotient $UVB_n/\langle \!\langle H_n \rangle \!\rangle$ coincides with the  semi-direct product considered in~\cite[Section 2.7]{Ti} and in the Remark following~\cite[Proposition 5.12]{M09}. Thus Proposition~\ref{prop:cris3} yields a new proof of the fact that the crystallographic braid group $B_n/[P_n, P_n]$ embeds in the semi-direct product $\mathbb{Z}^{n(n-1)/2} \rtimes S_n$. 
\end{rem}

\section{Torsion elements of $UVB_n$}\label{sec:torsionuvbn}

Let $n\geq 3$. In~\cite{GGO17}, it was shown that $B_n/[P_n, P_n]$ has torsion, but that it possesses no elements of even order. Viewing $B_n/[P_n, P_n]$ as the subgroup $\operatorname{\text{Im}}(\eta)$ of $UVB_n$ via Proposition~\ref{prop:cris}, we may give an alternative proof of this latter fact. We do this by first characterising the elements of $UVB_n$ of order $2$, and then showing that these elements do not belong to $\operatorname{\text{Im}}(\eta)$.
%For more details and other results about $B_n/[P_n, P_n]$, we refer the reader to~\cite{GGO17,GGO19,GGO20,M16,BM}. 

\begin{thm}\label{thm:ord2}
Let $n\geq 3$.
\begin{enumerate}[(a)]
\item\label{it:ord2a} An element $v\in UVB_{n}$ is of order $2$ if and only if there exist $\rho\in \operatorname{\text{Im}}(\iota)$ of order $2$ and $g\in UVP_{n}$ such that $v=g\rho g^{-1}$.
\item\label{it:ord2b} The elements of order $2$ of $UVB_n$ do not belong to $\operatorname{\text{Im}}(\eta)$. In particular, $B_n/[P_n, P_n]$ has no elements of even order.
\end{enumerate}
\end{thm}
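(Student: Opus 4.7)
The plan is to establish part~(a) by unpacking $v^2 = 1$ inside the semi-direct product decomposition $UVB_n \cong UVP_n \rtimes S_n$ of Theorem~\ref{thm:tss2}, and then to deduce part~(b) by projecting $\eta(b)$ to the quotient $\Z^{n(n-1)/2} \rtimes S_n$ of Proposition~\ref{prop:cris3} and extracting an obstruction from the parity of crossings in $b$.

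For part~(a), the implication $(\Leftarrow)$ is immediate. For $(\Rightarrow)$, I would write $v = g\iota(s)$ with $g \in UVP_n$ and $s = \pi_{UVP}(v) \in S_n$, so that $v^2 = g\phi_s(g)\iota(s^2)$, where $\phi_s$ denotes conjugation by $\iota(s)$. Thus $v^2 = 1$ is equivalent to $s^2 = 1$ together with $g\phi_s(g) = 1$; since $UVP_n$ is torsion-free (being a direct product of free groups, by Theorem~\ref{thm:tss}), the case $s = 1$ forces $v = 1$, so $s$ is a non-trivial involution. It then suffices to produce $h \in UVP_n$ satisfying $h\phi_s(h)^{-1} = g$, since then $v = h\iota(s)h^{-1}$. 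The plan is to use the decomposition $UVP_n = \prod_{\{i,j\}} \F{i,j}$ and to solve this equation orbit by orbit under the $\langle s\rangle$-action on unordered pairs. Orbits of size two, and singleton orbits on which $s$ fixes both indices, yield to direct arguments (in the latter case $\phi_s$ is trivial and $g^2 = 1$ in a free group forces $g = 1$). The delicate case is a singleton orbit $\{\{i,j\}\}$ on which $s$ exchanges $i$ and $j$, so $\phi_s$ acts on $\F{i,j}$ by swapping $\lambda_{i,j}$ and $\lambda_{j,i}$. Here I would observe that $\F{i,j} \rtimes \Z/2\Z \cong \Z * \Z/2\Z$ (take $x = \lambda_{i,j}$ and $y$ the generator of $\Z/2\Z$, so that $\lambda_{j,i} = yxy$); by the Kurosh subgroup theorem every element of order $2$ in $\Z * \Z/2\Z$ is conjugate to $y$, and unwinding this conjugacy produces the required $h$.

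For part~(b), suppose for a contradiction that $v = \eta(b)$ has order $2$ for some $b \in B_n$, so that $s := \pi(b)$ is an involution. Writing $\eta(b) = \mu\iota(s)$ in semi-direct form, part~(a) forces $\mu = g\phi_s(g)^{-1}$ for some $g \in UVP_n$. The plan is then to apply the projection $\pi_H\colon\thinspace UVB_n \to \Z^{n(n-1)/2} \rtimes S_n$ of Proposition~\ref{prop:cris3}, which identifies $\lambda_{i,j}$ with $\lambda_{j,i}$ and collapses each $\F{i,j}$ to a single $\Z$-factor with generator $x_{\{i,j\}}$. Since $\pi_H$ is $S_n$-equivariant, the image $\widetilde{\mu}$ of $\mu$ satisfies $\widetilde{\mu} = (1 - s)\widetilde{g}$; consequently, its coefficient on $x_{\{p,q\}}$ must vanish whenever $s$ fixes $\{p,q\}$ setwise. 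On the other hand, expanding $\eta(b)$ step by step from $\eta(\sigma_j) = \lambda_{j,j+1}^{-1}\rho_j$, and bringing each partial product into normal form in $UVP_n \rtimes S_n$, shows that the coefficient of $\widetilde{\mu}$ on $x_{\{p,q\}}$ is $-E_{p,q}(b)$, where $E_{p,q}(b)$ is the signed count of crossings of strands $p$ and $q$ in $b$. For any transposition $(p,q)$ in the cycle decomposition of $s$, strands $p$ and $q$ swap endpoints and therefore cross an odd number of times; so $E_{p,q}(b)$ is odd and non-zero, contradicting the vanishing just established.

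I expect the main obstacle to be the swap-orbit case of part~(a): reducing $\F{i,j} \rtimes \Z/2\Z$ to $\Z * \Z/2\Z$ and invoking Kurosh is the sole non-formal ingredient of the argument, and it is what makes the characterisation of involutions in $UVB_n$ nontrivial. The combinatorial observation used in part~(b)---that a transposition $(p,q)$ of $\pi(b)$ forces an odd signed crossing count between the corresponding strands---is elementary, and everything else reduces to careful bookkeeping in the semi-direct product.
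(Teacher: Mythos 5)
Your proposal is correct, and its overall architecture (semi-direct product decomposition, reduction of $v^2=1$ to the twisted equation $g\,\phi_s(g)=1$, orbit-by-orbit analysis on the factors $F_{i,j}$ of $UVP_n$, then a parity obstruction for part~(b)) coincides with the paper's. The two places where you genuinely diverge are worth noting. First, for the crucial swap-orbit case of part~(a) --- solving $w\,\alpha(w)=1$ in $F_2$ with $\alpha$ the involution exchanging the two generators --- the paper proceeds by an explicit reduced-word computation (writing $w=a^{k_1}b^{l_1}\cdots$, deducing $l_q=-k_{m+1-q}$, and exhibiting $y$ with $w=y\,\alpha(y)^{-1}$ by hand; this is isolated as Lemma~\ref{prep:lemma1} for reuse in the proof of Theorem~\ref{thm:thm_of_torsion}). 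You instead identify $F_2\rtimes\Z/2\Z$ with $\Z * \Z/2\Z$ and invoke the conjugacy of torsion elements into free factors (Kurosh); this is cleaner and more conceptual, at the cost of importing a bigger theorem, and it gives the existence of $u$ without the explicit formula the paper later exploits. Second, for part~(b) the paper first characterises membership of $\operatorname{\text{Im}}(\eta)$ via the subgroup $\Gamma=\langle \lambda_{i,j}^{-1}\lambda_{j,i}^{-1}\rangle$ and detects the contradiction with the evaluation homomorphisms $\varepsilon_{i,j}$ (whose values on $\Gamma$ lie in $2\Z$), computing $\varepsilon_{i_1,j_1}=1$ on a suitably normalised element; you push everything to the crystallographic quotient $\Z^{n(n-1)/2}\rtimes S_n$ of Proposition~\ref{prop:cris3} and read the same obstruction as ``the $x_{\{p,q\}}$-coordinate is $0$ by $s$-invariance but equals $-E_{p,q}(b)$, which is odd for a transposed pair of strands''. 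These are the same parity argument in different clothing ($\varepsilon_{i,j}$ is exactly the $x_{\{i,j\}}$-coordinate after identifying $\lambda_{i,j}$ with $\lambda_{j,i}$ and abelianising), but your linking-number phrasing makes the geometric content more visible. Both of your computations check out, including the claim that strands interchanged by $\pi(b)$ cross an odd number of times.
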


\begin{proof}
Let $n\geq 3$. 
\begin{enumerate}[(a)]
\item First observe that the given condition is clearly sufficient. Conversely, suppose that $v\in UVB_{n}$ is of order $2$. Identifying $UVB_n$ with the internal semi-direct product $UVP_n\rtimes \operatorname{\text{Im}}(\iota)$ using Theorem~\ref{thm:tss2}, there exist unique elements $w\in UVP_{n}$ and $\rho \in \operatorname{\text{Im}}(\iota)$ such that $v=w\rho$, and the pair $(w,\rho)$ is non trivial. Further, the fact that $UVP_{n}$ is torsion free implies that $\rho \neq 1$. Since $v$ is of order $2$, we have $1=v^{2}=(w\rho)^{2}=w\ldotp \rho w \rho\inv \ldotp \rho^{2}$, where $w\ldotp \rho w \rho\inv\in UVP_n$ and $\rho^{2}\in \operatorname{\text{Im}}(\iota)$, from which we deduce using Theorem~\ref{thm:tss2} that $\rho$ is also of order $2$. Hence $\pi_{UVP}(\rho) \in S_{n}$ is also of order $2$, and thus may be written as a non-trivial product of disjoint transpositions. So there exists $\tau\in \operatorname{\text{Im}}(\iota)$ such that $\pi_{UVP}(\tau\rho\tau\inv)=\sigma$, where $\sigma=(1,2)(3,4)\cdots (m,m+1)$, and $1\leq m\leq n-1$ is odd. Setting $\widetilde{v}=\tau v \tau\inv$, $\widetilde{w}=\tau w \tau\inv$ and $\widetilde{\rho}=\tau \rho \tau\inv$, it follows that $\widetilde{v}= \widetilde{w} \widetilde{\rho}$, where $\widetilde{v}\in UVB_n$ is of order $2$, $\widetilde{w}\in UVP_{n}$, $\widetilde{\rho} \in \operatorname{\text{Im}}(\iota)$ is of order $2$, $\pi_{UVP}(\widetilde{\rho})= \sigma$, and $\widetilde{w}\ldotp \widetilde{\rho}\widetilde{w}\widetilde{\rho}\inv=1$. For $1\leq i<j\leq n$, 
%let $F_{i,j}$ be the free subgroup of $UVP_{n}$ of rank $2$ for which $\{ \lambda_{i,j}, \lambda_{j,i} \}$ is a basis, and 
let $\pi_{i,j}\colon\thinspace UVP_{n} \longrightarrow F_{i,j}$ denote the projection of $UVP_{n}$ onto $F_{i,j}$ (see Theorem~\ref{thm:tss} and the comments that follow it). Applying~(\ref{eq:conjuvpn}), we observe that conjugation by an element of $UVB_n$ permutes the $\{ F_{i,j}\}_{1\leq i<j\leq n}$. Let $T$ denote the subset of $n(n-1)/2$ transpositions of $S_{n}$, and let $T_{\sigma}=\{ (i,i+1)\in T  \,\mid\, i\in \{ 1,3,\ldots,m\}\}$. Then the subgroup $\langle\sigma\rangle$ of $S_{n}$ of order $2$ acts on $T$ by conjugation, and if $1\leq i<j\leq n$,  the orbit $\mathcal{O}(i,j)$ of $(i,j)$ is equal to $\{(i,j),(\sigma(i),\sigma(j))\}$, which is equal to $\{ (i,j)\}$ if and only if either $(i,j)\in T_{\sigma}$ or if $m+2\leq i<j\leq n$, and contains two elements otherwise. Further, for all $1\leq i<j\leq n$, $\sigma(i)<\sigma(j)$ if and only if $(i,j)\notin T_{\sigma}$. Let $\mathcal{T}$ be a transversal for this action of $\langle\sigma\rangle$ on $T$. Using Theorem~\ref{thm:tss}, we obtain:
\begin{equation}\label{eq:tildew}
\widetilde{w}=\prod_{(i,j)\in \mathcal{T}}\prod_{(k,l)\in \mathcal{O}(i,j)} w_{k,l}(\lambda_{k,l}, \lambda_{l,k}),
\end{equation}
where $w_{k,l}=w_{k,l}(\lambda_{k,l}, \lambda_{l,k})\in F_{k,l}$, and this decomposition is unique up to permutation of the factors. So:
\begin{align}
1&= \widetilde{w}\ldotp \widetilde{\rho} \widetilde{w} \widetilde{\rho}\,\inv=\left( \prod_{(i,j)\in \mathcal{T}}\prod_{(k,l)\in \mathcal{O}(i,j)} w_{k,l}(\lambda_{k,l}, \lambda_{l,k})\right) \widetilde{\rho} \left( \prod_{(i,j)\in \mathcal{T}}\prod_{(k,l)\in \mathcal{O}(i,j)} w_{k,l}(\lambda_{k,l}, \lambda_{l,k})\right)\widetilde{\rho}\,\inv\notag\\
&= \left( \prod_{(i,j)\in \mathcal{T}}\prod_{(k,l)\in \mathcal{O}(i,j)} w_{k,l}(\lambda_{k,l}, \lambda_{l,k})\right) \left( \prod_{(i,j)\in \mathcal{T}}\prod_{(k,l)\in \mathcal{O}(i,j)} w_{k,l}(\lambda_{\sigma(k),\sigma(l)}, \lambda_{\sigma(l), \sigma(k)})\right)\notag\\
&= \prod_{(i,j)\in \mathcal{T}}\prod_{(k,l)\in \mathcal{O}(i,j)} w_{k,l}(\lambda_{k,l}, \lambda_{l,k}) \ldotp w_{\sigma(k),\sigma(l)}(\lambda_{k,l}, \lambda_{l,k}),\label{eq:wrho}
\end{align}
where we have used the fact that $\sigma^{2}=1$. Note also that if $(i,j)\in T_{\sigma}$ then $j=\sigma(i)>\sigma(j)=i$, and in this case, any term of the form $w_{\sigma(i),\sigma(j)}(\lambda_{i,j}, \lambda_{j,i})$ in~(\ref{eq:wrho}) should be interpreted as $w_{i,j}(\lambda_{j,i}, \lambda_{i,j})$. The expression~(\ref{eq:wrho}) is written with respect to the direct product structure of $UVP_{n}$. It follows from Theorem~\ref{thm:tss} that $w_{i,j}(\lambda_{i,j}, \lambda_{j,i}) \ldotp w_{\sigma(i),\sigma(j)}(\lambda_{i,j}, \lambda_{j,i})=1$ for all $1\leq i< j\leq n$, and hence:
\begin{equation}\label{eq:wsigmaij}
w_{\sigma(i),\sigma(j)}(\lambda_{\sigma(i),\sigma(j)}, \lambda_{\sigma(j),\sigma(i)})= w_{i,j}^{-1}(\lambda_{\sigma(i),\sigma(j)}, \lambda_{\sigma(j),\sigma(i)}).
\end{equation}

%\begin{enumerate}[(i)]
%\item Suppose that $m+2\leq i<j\leq n$. Then from~(\ref{eq:wrho}), we have $(w_{i,j}(\lambda_{i,j}, \lambda_{j,i}))^{2}=1$ in $F_{i,j}$, and thus $w_{i,j}(\lambda_{i,j}, \lambda_{j,i})=1$.
%\item 

Suppose that $(i,j)\in T_{\sigma}$. Then $w_{i,j}(\lambda_{i,j}, \lambda_{j,i}) \ldotp w_{i,j}( \lambda_{j,i},\lambda_{i,j})=1$. Writing $a=\lambda_{i,j}$, $b=\lambda_{j,i}$ and $w_{i,j}(a,b)=a^{k_{1}}b^{l_{1}}\cdots a^{k_{m}}b^{l_{m}}$, where $m\geq 0$, $k_{1},l_{1},\ldots, k_{m},l_{m}\in \Z$ and $l_{1},\ldots, k_{m}\neq 0$, it follows from the relation $w_{i,j}(a,b)\ldotp w_{i,j}(b,a)=1$ that $j_{q}=-k_{m+1-q}$ for all $q=1,\ldots,m$. Taking $y_{i,j}(a,b)=a^{k_{1}}b^{-k_{m}}\cdots a^{k_{m/2}}b^{-k_{(m+2)/2}}$ (resp.\ $y_{i,j}(a,b)=a^{k_{1}}b^{-k_{m}}\cdots a^{k_{(m-1)/2}}b^{-k_{(m+3)/2}} a^{k_{(m+1)/2}}$) if $m$ is even (resp.\ odd), we see that $w_{i,j}(a,b)=y_{i,j}(a,b) \ldotp y_{i,j}\inv(b,a)$. Thus $w_{i,j}(\lambda_{i,j}, \lambda_{j,i})= y_{i,j}(\lambda_{i,j}, \lambda_{j,i}) \ldotp y_{i,j}^{-1}(\lambda_{j,i},\lambda_{i,j})$. 
%\end{enumerate}
Setting $z_{i,j}=w_{i,j}$ if $(i,j)\notin T_{\sigma}$ and $z_{i,j}=y_{i,j}$ if $(i,j)\in T_{\sigma}$, it follows from~(\ref{eq:tildew}) and~(\ref{eq:wsigmaij}) that:
\begin{equation}\label{eq:tildev}
\widetilde{v}=\widetilde{w}\widetilde{\rho}=\left(\prod_{(i,j)\in \mathcal{T}} z_{i,j}(\lambda_{i,j}, \lambda_{j,i}) z_{i,j}^{-1}(\lambda_{j,i}, \lambda_{i,j}) \right)\widetilde{\rho}.
\end{equation}
Now the terms $z_{i,j}(\lambda_{i,j}, \lambda_{j,i})$ (resp.\ $z_{i,j}^{-1}(\lambda_{j,i}, \lambda_{i,j})$) appearing in~(\ref{eq:tildev}) commute pairwise, and setting $\widetilde{g}=\prod_{(i,j)\in \mathcal{T}} z_{i,j}(\lambda_{i,j}, \lambda_{j,i})\in UVP_{n}$, we obtain:
\begin{equation*}
\widetilde{v}=\left(\prod_{(i,j)\in \mathcal{T}} z_{i,j}(\lambda_{i,j}, \lambda_{j,i}) \right)\left(\prod_{(i,j)\in \mathcal{T}} z_{i,j}^{-1}(\lambda_{j,i}, \lambda_{i,j}) \right)\widetilde{\rho}=\widetilde{g} \widetilde{\rho}\, \widetilde{g}^{-1}.
\end{equation*}
Hence $v=\tau^{-1} \widetilde{v} \tau=g \rho g^{-1}$, where $\rho \in \operatorname{\text{Im}}(\iota)$ is of order $2$ and $g=\tau^{-1}\widetilde{g}\tau \in UVP_{n}$, which proves that the condition of part~(\ref{it:ord2a}) is also necessary.

\item We start by characterising the elements of $\operatorname{\text{Im}}(\eta)$. Let $v\in UVB_n$. Then $\pi_{UVP}(v)\in S_n$, and so $\pi_{UVP}(v)=s_{i_1}\cdots s_{i_r}$, where $r\geq 0$, and for all $j=1,\ldots,r$, $s_{i_j}\in \{ (k,k+1) \,\mid\, k=1,\ldots, n-1\}$. Hence $v=w\rho$, where $w\in UVP_n$ and $\rho= \iota(\pi_{UVP}(v))=\rho_{i_1}\cdots \rho_{i_r}$. Let $\beta=\sigma_{i_1}\cdots \sigma_{i_r}\in B_n$. Then by~(\ref{eq:lambda}), we have $\eta(\beta)=\sigma_{i_1}\cdots \sigma_{i_r}=\lambda_{i_1,i_1+1}^{-1}\rho_{i_1} \cdots \lambda_{i_r,i_r+1}^{-1}\rho_{i_r}=y \rho_{i_1}\cdots \rho_{i_r}=y \rho \in \operatorname{\text{Im}}(\eta)$, where $y\in UVP_n$. It follows that:
\begin{equation*}
v\in \operatorname{\text{Im}}(\eta) \Longleftrightarrow (\eta(\beta))^{-1} v\in \operatorname{\text{Im}}(\eta) \Longleftrightarrow \rho^{-1} y^{-1} w \rho \in \operatorname{\text{Im}}(\eta).
\end{equation*}
Now $y^{-1} w \in UVP_n$, so $\rho^{-1} y^{-1} w \rho \in UVP_n$, and it follows that $v\in \operatorname{\text{Im}}(\eta)$ if and only if $\rho^{-1} y^{-1} w \rho\in \eta(P_n)$. From Section~\ref{sec:cryst}, $P_n$ is generated by the set $\{a_{ij} \,\mid\, 1 \leq i < j \leq n\}$, and since $\eta(a_{ij})=\lambda_{i,j}^{-1} \lambda_{j,i}^{-1}$ for all $1\leq i<j\leq n$, we see that $v\in \operatorname{\text{Im}}(\eta)$ if and only if $\rho^{-1} y^{-1} w \rho$ belongs to the free Abelian subgroup $\Gamma$ of $UVP_n$ of rank $n(n-1)/2$ generated by the set $\{\lambda_{i,j}^{-1} \lambda_{j,i}^{-1} \,\mid\, 1 \leq i < j \leq n\}$. In particular, if $1\leq i<j\leq n$ and $\varepsilon_{i,j}\colon\thinspace UVP_n \longrightarrow \Z$ denotes the evaluation homomorphism defined by $\varepsilon_{i,j}(\lambda_{k,l})=1$ if $\{k,l\}=\{i,j\}$ and $\varepsilon_{i,j}(\lambda_{k,l})=0$ otherwise, where $1\leq k,l\leq n$, $k\neq l$, then $\varepsilon_{i,j}(\Gamma)=2\Z$ for all $1\leq i<j\leq n$.

Now suppose on the contrary that $\operatorname{\text{Im}}(\eta)$ possesses an element $v$ of order $2$. From part~(\ref{it:ord2a}), there exist $g\in UVP_n$ and $\rho\in \operatorname{\text{Im}}(\iota)$ of order $2$ such that $v=g\rho g^{-1}$. Then $\pi_{UVP}(\rho)$ is also of order $2$, and so may be written as a non-trivial product of transpositions whose supports are pairwise disjoint. So there exist $1\leq r\leq n/2$ and distinct elements $i_1,j_1,\ldots,i_r,j_r$ of $\{1,\ldots,n\}$ such that $i_k<j_k$ for all $1\leq k \leq r$ and $\rho=\iota((i_1,j_1)\cdots (i_r,j_r))$. Let $\tau \in \operatorname{\text{Im}}(\iota)$ be such that $\pi_{UVP}(\tau \rho\tau^{-1})=(1,2) \cdots (2r-1,2r)$, let $\widetilde{\rho}= \tau \rho\tau^{-1} \in \operatorname{\text{Im}}(\iota)$, let $\widehat{\tau}\in B_n$ be such that $\pi(\widehat{\tau})= \pi_{UVP}(\tau)$, let $\beta = \sigma_1\sigma_3 \cdots \sigma_{2r-1}$, and let $\widehat{\beta} = \widehat{\tau}^{-1} \beta \widehat{\tau}$. Then $\widetilde{\rho}=\rho_1\rho_3 \cdots \rho_{2r-1}$ and  $\eta(\beta)=\sigma_1\sigma_3 \cdots \sigma_{2r-1}=\lambda_{1,2}^{-1}\lambda_{3,4}^{-1}\cdots \lambda_{2r-1,2r}^{-1} \widetilde{\rho}$ by~(\ref{eq:lambda}) and Theorem~\ref{thm:tss}. Since $v\in \operatorname{\text{Im}}(\eta)$, it follows that $(\eta(\widehat{\beta}))^{-1}\ldotp v \in \operatorname{\text{Im}}(\eta)$. Hence:
\begin{equation}\label{eq:etav}
(\eta(\widehat{\beta}))^{-1}\ldotp v = \eta(\widehat{\tau}^{-1})  \widetilde{\rho}^{\,-1} \eta(\widehat{\tau}) \ldotp \eta(\widehat{\tau}^{-1})  \lambda_{2r-1,2r} \cdots \lambda_{3,4} \lambda_{1,2} \eta(\widehat{\tau}) \ldotp g\rho g^{-1}.
\end{equation}
Now using~(\ref{eq:cd1}), we have $\pi_{UVP}(\eta(\widehat{\tau}))= \pi(\widehat{\tau})=\pi_{UVP}(\tau)$, so:
\begin{equation}\label{eq:etaconj}
\eta(\widehat{\tau}^{-1})  \lambda_{2r-1,2r} \cdots \lambda_{3,4} \lambda_{1,2} \eta(\widehat{\tau})= \lambda_{i_r,j_r} \cdots \lambda_{i_2,j_2} \lambda_{i_1,j_1}.
\end{equation}
It follows from~(\ref{eq:etav}) and~(\ref{eq:etaconj}) that:
\begin{align}
(\eta(\widehat{\beta}))^{-1}\ldotp v &=\eta(\widehat{\tau}^{-1})  \widetilde{\rho}^{\,-1} \eta(\widehat{\tau}) \rho\ldotp \rho^{-1} \lambda_{i_r,j_r} \cdots \lambda_{i_2,j_2} \lambda_{i_1,j_1} \rho\ldotp \rho^{-1} g\rho g^{-1}\notag\\
&=\eta(\widehat{\tau}^{-1})  \widetilde{\rho}^{\,-1} \eta(\widehat{\tau}) \rho  \lambda_{j_r,i_r} \cdots \lambda_{j_2,i_2} \lambda_{j_1,i_1} \rho^{-1} g\rho \ldotp g^{-1}\label{eq:etav2}
\end{align}
Since $\pi_{UVP}(\eta(\widehat{\tau}))= \pi(\widehat{\tau})=\pi_{UVP}(\tau)$, there exists $z\in UVP_n$ such that $\eta(\widehat{\tau})=\tau z$, and hence:
\begin{align}
(\eta(\widehat{\beta}))^{-1}\ldotp v &= z^{-1} \tau^{-1} \widetilde{\rho}^{\,-1} \tau z \rho \ldotp \lambda_{j_r,i_r} \cdots \lambda_{j_2,i_2} \lambda_{j_1,i_1} \ldotp \rho^{-1} g\rho \ldotp g^{-1}\notag\\
&=z^{-1}\ldotp \rho^{-1} z \rho \ldotp \lambda_{j_r,i_r} \cdots \lambda_{j_2,i_2} \lambda_{j_1,i_1} \ldotp \rho^{-1} g\rho \ldotp g^{-1}.\label{eq:prodzg}
\end{align}
The expression on the right-hand side of~(\ref{eq:prodzg}) is written as a product of elements of $UVP_n$, which we will now project onto $F_{i_1,j_1}$. Let $\alpha(\lambda_{i_1,j_1}, \lambda_{j_1,i_1})=\pi_{i_1,j_1}(z)$ et $\beta(\lambda_{i_1,j_1}, \lambda_{j_1,i_1})=\pi_{i_1,j_1}(g)$. 
Using~(\ref{eq:conjuvpn}) and the fact that $\pi_{UVP}(\rho)=(i_1,j_1)\cdots (i_r,j_r)$, by~(\ref{eq:prodzg}), we have: 
\begin{equation*}
\pi_{i_1,j_1}((\eta(\widehat{\beta}))^{-1}\ldotp v)=(\alpha(\lambda_{i_1,j_1}, \lambda_{j_1,i_1}))^{-1} \ldotp \alpha(\lambda_{j_1,i_1}, \lambda_{j_1,i_1}) \ldotp \lambda_{j_1,i_1} \ldotp \beta(\lambda_{j_1,i_1},\lambda_{i_1,j_1}) \ldotp (\beta(\lambda_{i_1,j_1}, \lambda_{j_1,i_1}))^{-1},
\end{equation*}
from which it follows that $\varepsilon_{i_1,j_1}(\eta(\widehat{\beta}))^{-1}\ldotp v)=1$. We conclude from the previous paragraph that $\eta(\widehat{\beta}))^{-1}\ldotp v \notin \Gamma$, and thus $v\notin \operatorname{\text{Im}}(\eta)$, which yields a contradiction. Therefore $\operatorname{\text{Im}}(\eta)$ contains no elements of order $2$, and we deduce from Proposition~\ref{prop:cris} that $B_n/[P_n,P_n]$ has no elements of even order.\qedhere
\end{enumerate}
\end{proof}

\begin{prop}\label{prop:torsuvbn}
If $n\geq 3$ then any torsion element of $UVB_n$ belongs to the normal closure of $\operatorname{\text{Im}}(\iota)$ in $UVB_n$.
\end{prop}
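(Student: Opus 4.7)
The plan is to prove a considerably stronger statement, namely that the quotient $Q_n := UVB_n/\langle\!\langle \operatorname{\text{Im}}(\iota)\rangle\!\rangle$ is isomorphic to $\mathbb{Z}$. Since $\mathbb{Z}$ is torsion-free, the projection $UVB_n \to Q_n$ sends every torsion element to $0$, so every torsion element must lie in $\langle\!\langle \operatorname{\text{Im}}(\iota)\rangle\!\rangle$, which is the desired conclusion.

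To identify $Q_n$, I would first observe that $\operatorname{\text{Im}}(\iota)$ is the subgroup of $UVB_n$ generated by $\rho_1,\ldots,\rho_{n-1}$, so presenting $Q_n$ amounts to adjoining the relations $\rho_i=1$ ($i=1,\ldots,n-1$) to the presentation of $UVB_n$ in Definition~\ref{D:Unrestricted}. A direct inspection of the seven families of relations shows that (SR1)--(SR3), (MR1), (OC) and (UC) become trivial after substitution, while the (MR2) relation $\rho_i\rho_{i+1}\sigma_i=\sigma_{i+1}\rho_i\rho_{i+1}$ collapses to $\sigma_i=\sigma_{i+1}$ for every $i=1,\ldots,n-2$. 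Consequently, in $Q_n$ all the $\sigma_i$ are identified with a single element $\bar{\sigma}$, and the residual relations (BR1) and (BR2) are then trivially satisfied. Hence $Q_n$ is a cyclic group generated by $\bar{\sigma}$.

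To rule out the possibility that this cyclic group is finite, I would exhibit a surjective homomorphism $\varphi\colon\thinspace UVB_n \to \mathbb{Z}$ sending each $\sigma_i\mapsto 1$ and each $\rho_i\mapsto 0$; a routine check shows that this assignment respects every relation of Definition~\ref{D:Unrestricted}. Since $\varphi$ kills $\operatorname{\text{Im}}(\iota)$, it factors through $Q_n$, so the cyclic group $Q_n$ surjects onto $\mathbb{Z}$, whence $Q_n\cong\mathbb{Z}$. There is no real obstacle in this argument: the only step that requires any care is the simplification of the (MR2) relations, and this is exactly the step that supplies the whole content of the proof, since it is the mechanism through which killing $\operatorname{\text{Im}}(\iota)$ forces the $\sigma_i$ to coalesce.
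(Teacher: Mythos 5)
Your proof is correct and follows exactly the same route as the paper: the paper also observes that $UVB_n/\langle\!\langle\operatorname{\text{Im}}(\iota)\rangle\!\rangle\cong\mathbb{Z}$ and concludes by torsion-freeness of $\mathbb{Z}$. You have merely filled in the details of the relation-collapsing computation that the paper leaves as ``one may check''.
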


\begin{proof}
From Definition~\ref{D:Unrestricted}, one may check that the quotient of $UVB_n$ by the normal closure of $\operatorname{\text{Im}}(\iota)$ in $UVB_n$ is isomorphic to $\Z$. If $g$ is a torsion element of $UVB_n$, its image in this quotient is thus trivial, in other words $g$ belongs to the normal closure of $\operatorname{\text{Im}}(\iota)$ in $UVB_n$.
\end{proof}

One may show in the same manner that the statement of Proposition~\ref{prop:torsuvbn} also holds for $VB_n$ and $WB_n$.
%(see also~\cite{BEER}, where $VP_n$ is denoted by $QTr_n$, and~\cite{BeP} for the welded case, where it is shown that $WP_n$ is residually torsion free nilpotent). 
In the case of $UVB_n$, we may strengthen the results of Theorem~\ref{thm:ord2}(\ref{it:ord2a}) and Proposition~\ref{prop:torsuvbn}.

\begin{thm}\label{thm:thm_of_torsion}
Let $n\geq 2$, and let $w$ be a torsion element of $UVB_n$ of order $r$. Then there exists $s_w \in S_n$ of order $r$ such that $w$ is conjugate to $\iota(s_w)$ by an element of $UVP_n$.
\end{thm}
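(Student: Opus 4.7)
The plan is to generalize the argument of Theorem~\ref{thm:ord2}(\ref{it:ord2a}) by carrying out a cohomological analysis of the action of $\langle s_w\rangle$ on $UVP_n$ via the semi-direct product decomposition of Theorem~\ref{thm:tss2}. First, using this decomposition I write $w=u\rho$ with $u\in UVP_n$ and $\rho=\iota(s_w)$, where $s_w=\pi_{UVP}(w)\in S_n$. Since $UVP_n$ is a direct product of free groups (Theorem~\ref{thm:tss}), hence torsion-free, any power of $w$ lying in $UVP_n$ must be trivial; applied to $w^{r'}$ where $r'$ is the order of $s_w$, this forces $r'=r$. Producing a $g\in UVP_n$ with $gwg^{-1}=\rho$ is then equivalent to solving
\begin{equation*}
u=g^{-1}\sigma(g),
\end{equation*}
where $\sigma$ denotes the action of $s_w$ on $UVP_n$ by conjugation via $\iota$ (cf.~\eqref{eq:conjuvpn}). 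Expanding $w^r=1$ in the semi-direct product yields the cocycle constraint $u\cdot \sigma(u)\cdots \sigma^{r-1}(u)=1$.

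Next, since $\sigma$ permutes the direct factors in the decomposition $UVP_n=\prod_{1\le i<j\le n}F_{i,j}$ of Theorem~\ref{thm:tss}, both $u$ and the equation for $g$ split along the $\langle\sigma\rangle$-orbits of unordered pairs, so it suffices to work orbit by orbit. Fix an orbit $\mathcal{O}$ of size $k$ with representative $\{i_1,j_1\}$, label the factors $F^{(m)}=F_{\sigma^m(i_1),\sigma^m(j_1)}$ so that $\sigma$ cycles them modulo $k$, and let $\tau$ denote the action of $\sigma^k$ on $F^{(0)}$. The stabilizer $H=\langle\sigma^k\rangle$ either fixes $i_1,j_1$ pointwise, in which case $\tau$ is the identity (\emph{Case A}), or interchanges them, in which case $\tau$ is the involution of $F^{(0)}\cong F_2$ swapping the generators $\lambda_{i_1,j_1}$ and $\lambda_{j_1,i_1}$ (\emph{Case B}). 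In Case A, setting $g^{(0)}=1$ and propagating $g^{(m)}=\sigma(g^{(m-1)})(u^{(m)})^{-1}$ for $m=1,\ldots,k-1$ solves the equation on every coordinate; closure of the cycle is automatic, because the cocycle condition forces an element of the free group $F^{(0)}$ whose $(r/k)$-th power equals the identity to be trivial.

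The main obstacle is \emph{Case B}. Propagating as above from an arbitrary $g^{(0)}$, a direct computation shows that the only remaining constraint imposed by the closure of the cycle is
\begin{equation*}
(g^{(0)})^{-1}\tau(g^{(0)})=P,\quad\text{where }P:=u^{(0)}\cdot \sigma(u^{(k-1)})\sigma^2(u^{(k-2)})\cdots\sigma^{k-1}(u^{(1)})\in F^{(0)},
\end{equation*}
while projecting the cocycle condition to $F^{(0)}$ gives precisely $P\cdot \tau(P)=1$, i.e., $\tau(P)=P^{-1}$ (using torsion-freeness of $F_2$ to strip the outer $(r/(2k))$-th power). It thus remains to show that the nonabelian $H^1(\langle\tau\rangle,F_2)$ vanishes for the swap action. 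I would obtain this via the identification $F_2\rtimes\langle\tau\rangle\cong \Z\ast\Z/2$: the element $P\tau$ has order $2$ since $\tau(P)=P^{-1}$, hence by the Kurosh subgroup theorem is conjugate to $\tau$ in this free product; a short verification shows the conjugator may be chosen inside $F_2$, yielding the required $g^{(0)}$. (Alternatively, the explicit word $y_{i,j}(a,b)$ built in the proof of Theorem~\ref{thm:ord2}(\ref{it:ord2a}) adapts with essentially no modification to the present $P$.) Assembling the $g^{(0)}$'s across the orbits produces a global $g\in UVP_n$ with $gwg^{-1}=\iota(s_w)$, completing the proof.
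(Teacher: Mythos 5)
Your proof is correct and follows essentially the same route as the paper's: the semi-direct product decomposition, the cocycle condition $u\,\sigma(u)\cdots\sigma^{r-1}(u)=1$, the orbit-by-orbit splitting of $UVP_n$ along the $\langle s_w\rangle$-orbits of unordered pairs, and the case distinction according to whether the stabiliser of a factor fixes or swaps the indices reproduce exactly the strategy of Lemma~\ref{prep:lemma2}. The only genuine variations are your Kurosh-based argument (via $F_2\rtimes\langle\tau\rangle\cong\Z\ast\Z/2\Z$) for solving $P=g^{-1}\tau(g)$ when $P\tau(P)=1$, which is a clean substitute for the explicit word computation of Lemma~\ref{prep:lemma1}, and your telescoping propagation of the $g^{(m)}$, which replaces, but is equivalent to, the paper's step-by-step conjugation induction.
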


Before proving Theorem~\ref{thm:thm_of_torsion}, we define some notation, and we make study the action of the symmetric group $S_n$ on the group $UVP_n$ that will used in the proof. If $\tau\in S_n$, let $\operatorname{\text{Supp}}(\tau)$ denote its support. Let $s\in S_n$, let $o(s)$ denote  the order of $s$ and let $G_s$ denote the cyclic subgroup of $S_n$ of order $o(s)$ generated by the element $s$. By Theorem~\ref{thm:tss2}, the permutation $s$ acts on $UVP_n=\langle\lambda_{i,j}\ |\ 1\leq i\neq j\leq n\rangle$ by permuting the indices of the elements $\lambda_{i,j}$. To simplify the notation, if $g \in UVP_n$ and $s\in S_n$, we set $s(g)= \iota(s) g \iota(s)^{-1}$, and we shall identify $s$ with its image $\iota(s)$ in $UVB_n$. Let $s=s_1 \cdots s_{m(s)}$ be the cycle decomposition of $s$, where $1\leq m(s)\leq n$, and the subsets $\operatorname{\text{Supp}}(s_1),\ldots, \operatorname{\text{Supp}}(s_{m(s)})$ form a partition of the set $\{1,\ldots, n \}$ (in particular, some of the $\operatorname{\text{Supp}}(s_j)$, where $1\leq j\leq m(s)$, may be singletons). 
%By reordering $s_1, \ldots, s_{m(s)}$ if necessary, we may suppose that $1\in \operatorname{\text{Supp}}(s_1)$.

Let $I=\{ (i, j) \mid 1\le i\neq j \le n\}$. The action of $G_s$ on $I$ gives rise to a partition of $I$ in $n(s)$ disjoint orbits, where $n(s)\in \N$, and the cardinality of each such orbit is either the order of one cycle or is the least common multiple of the orders of two cycles of $s$. If $(i, j)\in I$, let $\mathcal{O}(i,j)$ denote its orbit, and let $\lvert \mathcal{O}(i,j)\rvert$ denote the cardinality of $\mathcal{O}(i,j)$. Clearly, $\lvert \mathcal{O}(j,i)\rvert= \lvert \mathcal{O}(i,j)\rvert$ for all $(i,j)\in I$. Further, if $\mathcal{O}(j,i)=\mathcal{O}(i,j)$ then there exist $1\leq q\leq m(s)$ and $p\in \N$ such that $i,j\in \operatorname{\text{Supp}}(s_q)$, $s_q^p(i)=j$ and $s_q^p(j)=i$, from which it follows that $\lvert \mathcal{O}(i,j)\rvert= o(s_q)$ is even, and if $(k,l)\in I$ then $(k,l)\in \mathcal{O}(i,j)$ if and only if $(l,k)\in \mathcal{O}(i,j)$. This gives rise to a partition $\sqcup_{k=1}^{n(s)} \mathcal{O}_k$  of $I$, where $n(s)\in \N$, that dominates the partition given by the action of $G_s$ on $I$, and for $k=1,\ldots, n(s)$, $\mathcal{O}_k$ is defined by the property that if $(i,j)\in I$, then $(i,j)\in \mathcal{O}_k$ if and only if $\mathcal{O}_k=\mathcal{O}(i,j) \cup \mathcal{O}(j,i)$. Note that $n(s)$ is the number of orbits of the action of $G_s$ on the set $\{ \{i, j\} \mid 1\le i\neq j \le n\}$ of \emph{unordered} pairs of elements of $\{ 1,\ldots,n\}$. 
Since $\lvert \mathcal{O}(j,i)\rvert= \lvert \mathcal{O}(i,j)\rvert$, and $\mathcal{O}(i,j)$ is even if $\mathcal{O}(j,i)=\mathcal{O}(i,j)$, it follows that for all $1\leq k\leq n(s)$, the cardinality $\lvert\mathcal{O}_k\rvert$ of $\mathcal{O}_k$ is even, so $\lvert\mathcal{O}_k\rvert=2n_k$ for some $n_k\in \N$. 
%By reordering the orbits $\mathcal{O}_{1},\ldots, \mathcal{O}_{n(s)}$ if necessary, we may suppose that $(1,2)\in \mathcal{O}_1$. 
Let $F_{\mathcal{O}_k} =\bigoplus_{(i, j) \in \mathcal{O}_k} F_{i,j}$, where as in Section~\ref{sec:uvb}, we identify $F_{i,j}$ with $F_{j,i}$. Then:
\begin{equation}\label{eq:decompFOk}
UVP_n= \bigoplus_{1\leq k\leq n(s)} F_{\mathcal{O}_k}.
\end{equation}

%By reordering the orbits if necessary, we may suppose that $I_1$ is the orbit of $(1,2)$, and that if $(2,1)\notin I_1$ then $(2,1)\in I_2$. Further, we set $\mathcal{O}_1= I_1$ if $(2,1) \in I_1$, and $\mathcal{O}_1= I_1 \sqcup I_2$ otherwise. Note that the cardinality $\lvert  \mathcal{O}_1 \rvert$ of $\mathcal{O}_1$ is even: indeed, if $(2,1) \in I_1$, $2$ belongs to the support of $s_1$, and then $\lvert  \mathcal{O}_1 \rvert = o(s_1)$, where $o(s_1)$ is even, since there exists $p\in \N$ such that $s_1^p(1)=2$ and $s_1^p(2)=1$, while if $(2,1) \in I_2$ then $I_1$ and $I_2$ have the same cardinality. Using the lexicographical order on the set $I$ we can define a new partition $I= \sqcup_{k=1}^{m(s)} \mathcal{O}_k$, where  $\mathcal{O}_k$  correspond to a single orbit   $I_l$ if  $\{ i,j \} \in I_l$ implies $\{j,i\} \in I_l$ and the disjoint sum of two orbits otherwise.
 
% We set $2n_k$ to be the cardinality of $\mathcal{O}_k$. Now let us set $F_{\mathcal{O}_k}=\oplus_{\{ i, j\} \in \mathcal{O}_k} F_{i,j}$  (where we identify $F_{i,j}$ with $F_{j,i}$).
 
The following lemma is folklore.
 
\begin{lem}\label{prep:lemma1}
Let $F_2=F_2 (x, y)$ be the free group of rank $2$ freely generated by $\{ x, y\}$, and let $\alpha\colon\thinspace F_2  \to  F_2$ be the involutive automorphism defined by $\alpha(x)=y$ and $\alpha(y)=x$. Let $w \in F_2$ be such that $w \alpha(w)=1$. Then there exists $u \in F_2$ such that $w=u \alpha(u^{-1})$.
\end{lem}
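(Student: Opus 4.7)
The plan is to reduce to a direct free-group word calculation, very much in the spirit of the palindrome argument already carried out in the proof of Theorem~\ref{thm:ord2}(\ref{it:ord2a}). The idea is to use the hypothesis $w\alpha(w)=1$, i.e., $\alpha(w)=w^{-1}$, to show that the reduced form of $w$ has a very restricted palindromic shape, and then to define $u$ as essentially the first half of $w$.

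First I would dispose of the trivial case $w=1$ by taking $u=1$. Otherwise, I would write $w$ uniquely as an alternating reduced word in $x$ and $y$ with nonzero exponents. Because $\alpha$ interchanges $x$ and $y$, the reduced forms of $\alpha(w)$ and $w^{-1}$ start (respectively end) with a power of the \emph{swapped} (resp.\ \emph{same}) generator as $w$, so the equality $\alpha(w)=w^{-1}$ forces $w$ to start with $x$ and end with $y$, or vice versa. In particular, $w$ has an even total number of blocks. Up to exchanging the roles of $x$ and $y$, I may assume
\[
w \;=\; x^{k_1} y^{l_1} x^{k_2} y^{l_2} \cdots x^{k_m} y^{l_m},
\]
with all $k_q,\, l_q \in \Z\setminus\{0\}$. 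Equating $\alpha(w)=y^{k_1} x^{l_1} \cdots y^{k_m} x^{l_m}$ with $w^{-1}=y^{-l_m}x^{-k_m}\cdots y^{-l_1}x^{-k_1}$ block by block then yields the \emph{palindrome relations}
\[
l_q \;=\; -k_{m+1-q}, \qquad q=1,\ldots,m.
\]

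With these relations in hand, I would define $u$ as the first half of $w$: for $m=2p$ set $u = x^{k_1} y^{l_1} \cdots x^{k_p} y^{l_p}$, and for $m=2p+1$ set $u = x^{k_1} y^{l_1} \cdots x^{k_p} y^{l_p} x^{k_{p+1}}$. In each case, expansion of $\alpha(u^{-1})$ (noting that there is no cancellation at the junction between $u$ and $\alpha(u^{-1})$, since the last block of $u$ and the first block of $\alpha(u^{-1})$ are powers of different generators) combined with the palindrome relations gives exactly $u\alpha(u^{-1})=w$.

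The only real obstacle is the bookkeeping of the two parities of $m$ and the symmetric case where $w$ starts with $y$ rather than $x$; both are routine and essentially identical to the computation already appearing in the proof of Theorem~\ref{thm:ord2}(\ref{it:ord2a}). No conceptual difficulty is expected.
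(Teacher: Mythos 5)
Your proof is correct and follows essentially the same route as the paper's: both use $w\alpha(w)=1$ to force the reduced alternating form of $w$ into a palindromic shape with exponent relations $l_q=-k_{m+1-q}$, and then take $u$ to be the first half of $w$. The only difference is notational (double-indexed blocks and an explicit parity split versus the paper's single-indexed $\varepsilon_1,\dots,\varepsilon_{2t}$), so there is nothing further to add.
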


\begin{proof}
Let $w \in F_2$ be such that $w \alpha(w)=1$. If $w=1$ then we may take $u=1$. So suppose that $w\neq 1$. Since $w\alpha(w)=1$, $w$ cannot be written in reduced form as a word starting and ending with a non-zero power of the same generator. By replacing $w$ by $\alpha(w)$ if necessary, we may thus suppose that $w=x^{\varepsilon_1}y^{\varepsilon_2}\cdots y^{\varepsilon_{2t-1}}y^{\varepsilon_{2t}}\neq1$, where $\varepsilon_1,\varepsilon_2,\dots,\varepsilon_{2t} \in \Z \setminus\{0\}$, and thus:
\begin{equation*}
1=w \alpha(w)=\underbrace{\big(x^{\varepsilon_1}y^{\varepsilon_2}\cdots x^{\varepsilon_{2t-1}}y^{\varepsilon_{2t}}\big)}_{\neq1}\cdot\underbrace{\big(y^{\varepsilon_1}x^{\varepsilon_2}\cdots y^{\varepsilon_{2t-1}}x^{\varepsilon_{2t}}\big)}_{\neq1}\in F_2.
\end{equation*}
It follows that $ \varepsilon_{2t-j} = - \varepsilon_{j+1}$  for $j=0,\ldots,2t-1$, from which we conclude that $w= u \alpha(u^{-1})$, where $u= x^{\varepsilon_1}y^{\varepsilon_2}\cdots y^{\varepsilon_{t}}$.
\end{proof}

In what follows, we shall identify $S_n$ with its image in $UVB_n$ by $\iota$. In particular, if $s\in S_n$, then $\iota(s)$ shall be denoted simply by $s$.
  
\begin{lem}\label{prep:lemma2}
Let $n\geq 2$, let $s\in S_n$, let $\gamma_1 \in F_{\mathcal{O}_l}$, where $1\leq l\leq n(s)$, and let $w_1=\gamma_1 s \in UVB_n$. If $w_1$ is of finite order then it is conjugate to $s$ by an element of $F_{\mathcal{O}_l}$.
\end{lem}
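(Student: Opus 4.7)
The plan is to reformulate the conjugation statement as the cocycle equation $\gamma_1=u\cdot s(u)^{-1}$, and to solve it using the product decomposition of $F_{\mathcal{O}_l}$ together with Lemma~\ref{prep:lemma1}.

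Conjugation of $s$ by an element $u\in F_{\mathcal{O}_l}$ yields $usu^{-1}=u\cdot s(u^{-1})\cdot s$, so the conclusion $usu^{-1}=\gamma_1 s$ is equivalent to finding $u\in F_{\mathcal{O}_l}$ with $\gamma_1=u\cdot s(u)^{-1}$. Expanding $w_1^r=\gamma_1\cdot s(\gamma_1)\cdots s^{r-1}(\gamma_1)\cdot s^r$ via Theorem~\ref{thm:tss2}, the torsion hypothesis forces both $s^r=1$ and $\prod_{j=0}^{r-1} s^j(\gamma_1)=1$. If $d$ denotes the order of the $s$-action on $F_{\mathcal{O}_l}$, then $d\mid o(s)\mid r$, and since $F_{\mathcal{O}_l}$ is torsion-free, the latter identity collapses to $\prod_{j=0}^{d-1} s^j(\gamma_1)=1$.

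Next, I would exploit the decomposition~(\ref{eq:decompFOk}). Choosing a representative $(i,j)\in\mathcal{O}_l$ and setting $m=n_l$, one has $F_{\mathcal{O}_l}=A_0\times\cdots\times A_{m-1}$ where $A_k=F_{s^k(i),s^k(j)}\cong F_2$, the factors commute pairwise, and $s$ cyclically shifts $A_k\to A_{k+1\bmod m}$. Two subcases arise: either $\mathcal{O}(i,j)\neq\mathcal{O}(j,i)$, in which case $d=m$ and $s^m$ acts trivially on each $A_k$; or $\mathcal{O}(i,j)=\mathcal{O}(j,i)$, in which case $d=2m$ and $s^m$ acts on each $A_k$ as the involution $\sigma_k$ swapping its two generators. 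Writing $\gamma_1=\eta_0\cdots\eta_{m-1}$ and seeking $u=u_0\cdots u_{m-1}$ with $\eta_k,u_k\in A_k$, the equation $\gamma_1=u\cdot s(u)^{-1}$ separates (since different $A_l$'s commute) into the componentwise identities $\eta_l=u_l\cdot s(u_{l-1\bmod m})^{-1}$. For $l=1,\dots,m-1$ this yields the recursion $u_l=\eta_l\cdot s(\eta_{l-1})\cdots s^{l-1}(\eta_1)\cdot s^l(u_0)$, and back-substituting into the $l=0$ relation produces the single consistency condition in $A_0$:
\begin{equation*}
\xi_0\;:=\;\eta_0\cdot s(\eta_{m-1})\cdot s^2(\eta_{m-2})\cdots s^{m-1}(\eta_1)\;=\;u_0\cdot s^m(u_0)^{-1}.
\end{equation*}

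To finish, I would verify this identity case by case. In the untwisted case the right-hand side is trivial for any $u_0$, and $\xi_0$ is exactly the $A_0$-component of $\prod_{j=0}^{m-1} s^j(\gamma_1)=1$, hence equals $1$; so $u_0=1$ works. In the twisted case, a direct bookkeeping in the $A_0$-component of $\prod_{j=0}^{2m-1} s^j(\gamma_1)=1$ gives $\xi_0\cdot\sigma_0(\xi_0)=1$, and Lemma~\ref{prep:lemma1} applied to $A_0\cong F_2$ with the involution $\sigma_0$ then supplies $u_0\in A_0$ with $\xi_0=u_0\cdot\sigma_0(u_0)^{-1}$. In both situations we obtain $u\in F_{\mathcal{O}_l}$ satisfying $\gamma_1=u\cdot s(u)^{-1}$, equivalently $w_1=usu^{-1}$. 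The crux of the argument, and the reason Lemma~\ref{prep:lemma1} was established, is precisely the twisted case, where the consistency condition is no longer automatic but reduces exactly to the involution identity handled by that lemma.
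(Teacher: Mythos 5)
Your proof is correct, and while it rests on the same two pillars as the paper's argument --- the decomposition of $F_{\mathcal{O}_l}$ into the $n_l$ free factors cyclically permuted by $s$, and Lemma~\ref{prep:lemma1} to absorb the twist when $\mathcal{O}(i_0,j_0)=\mathcal{O}(j_0,i_0)$ --- the way you organise it is genuinely different and noticeably cleaner. The paper first disposes of the order-$2$ case by invoking Theorem~\ref{thm:ord2}(\ref{it:ord2a}), then for $o(w_1)\geq 3$ performs an explicit induction (its equations (\ref{eq:gamma1s})--(\ref{eq:gamma1scom2})) that conjugates away the factors $u_{n_1-t}$ of $\gamma_1$ one at a time, with a fair amount of bookkeeping about which blocks commute with which. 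You instead reformulate the goal once and for all as the coboundary equation $\gamma_1=u\cdot s(u)^{-1}$, observe that it splits into componentwise equations along the factors $A_0,\dots,A_{m-1}$, solve the recursion for $u_1,\dots,u_{m-1}$ in terms of $u_0$, and reduce everything to a single consistency condition $\xi_0=u_0\cdot s^{m}(u_0)^{-1}$ in $A_0$; the identity $\prod_{j=0}^{d-1}s^j(\gamma_1)=1$ (extracted from torsion exactly as in the paper, using that $UVP_n$ is torsion free) projects onto $A_0$ to give precisely $\xi_0=1$ in the untwisted case and $\xi_0\cdot\sigma_0(\xi_0)=1$ in the twisted case, where Lemma~\ref{prep:lemma1} finishes. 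This buys you two things: a uniform treatment of all orders, including order $2$, so no appeal to Theorem~\ref{thm:ord2} is needed, and the replacement of the step-by-step conjugation induction by a one-line verification that the recursion is consistent --- essentially the vanishing of a first cohomology set for the cyclic action on the induced product $A_0\times\cdots\times A_{m-1}$. The paper's version, on the other hand, exhibits the conjugating element more explicitly at each stage, which is what it reuses in the proof of Theorem~\ref{thm:thm_of_torsion}; your $u=u_0\cdots u_{m-1}$ serves the same purpose there without change.
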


\begin{proof}
Let $s\in S_n$, $\gamma_1 \in F_{\mathcal{O}_l}$, where $1\leq l\leq n(s)$, and $w_1\in UVB_n$ be as in the statement. By reordering the orbits $\mathcal{O}_{1},\ldots, \mathcal{O}_{n(s)}$ if necessary, we may suppose that $l=1$, so $\gamma_1 \in F_{\mathcal{O}_1}$. The result is clear if $o(w_1)=1$. If $o(w_1)=2$, then by Theorem~\ref{thm:ord2}(\ref{it:ord2a}), there exists $g\in UVP_n$ such that $w_1=gsg^{-1}$. With respect to the decomposition~(\ref{eq:decompFOk}), let $g=\delta_1 \cdots \delta_{n(s)}$, where for $k=1,\ldots, n(s)$, $\delta_k \in F_{\mathcal{O}_k}$. Since $gsg^{-1}=\gamma_1 s$, we have:
\begin{equation}\label{eq:gamma1a}
\gamma_1 =\delta_1 \cdots \delta_{n(s)} \ldotp s \, (\delta_1 \cdots \delta_{n(s)})^{-1}s^{-1} = \delta_1 \cdots \delta_{n(s)} \ldotp s \delta_{n(s)}^{-1}s^{-1} \cdots s \delta_{1}^{-1}s^{-1}\in F_{\mathcal{O}_1}. 
\end{equation}
Now for all $k=1,\ldots, n(s)$, $F_{\mathcal{O}_k}$ is invariant under conjugation by $s$, and identifying the components of~(\ref{eq:gamma1a}) with respect to~(\ref{eq:decompFOk}), we see that $\delta_k \ldotp s \delta_k^{-1}s^{-1}=1$ for all $2\leq k\leq n(s)$, or in other words, $\delta_k$ and $s$ commute. It follows from~(\ref{eq:gamma1a}) that $\gamma_1=\delta_1 \ldotp s \delta_1^{-1}s^{-1}$, and thus $w_1=\gamma_1 s=\delta_1 s \delta_1^{-1}$, where $\delta_1\in F_{\mathcal{O}_1}$, as required. 

%$r\in \N$, and distinct integers $i_1,j_1,\ldots, i_r,j_r$ belonging to $\{1,\ldots,n\}$ such that $w_1=gsg^{-1}$ and $s=(i_1,j_1)\cdots (i_r,j_r)$. In this case, if $1\leq k\leq m(s)$ is such that $(i_1,j_1)\in \mathcal{O}_k$ then $\mathcal{O}_k=\{ (i_1,j_1), (j_1,i_1),\ldots, (i_r,j_r), (j_r,i_r)\}$. Using Theorem~\ref{thm:tss}, there exist $\gamma_1\in F_{\mathcal{O}_k}$ and $\gamma'\in \oplus_{\substack{1\leq l\leq m(s)\\ l\neq k}} F_{\mathcal{O}_l}$ such that $g=\gamma_1 \gamma'$. Now $\gamma'$ commutes with $s$, and so $w_1=gsg^{-1}=\gamma_1 s \gamma_1^{-1}$ as required.

So suppose that $o(w_1)\geq 3$. 
%Notice that the case if $w_1$ has order 2 the statement  is a consequence of Theorem~\ref{cas2}, we can therefore suppose that the order of $w_1$ is bigger than $2$. 
For all $m\in \N$, we have:
\begin{equation}\label{eq:w1m}
w_1^m= (\gamma_1 s)^m= \gamma_1s\gamma_1s^{-1}s^2\gamma_1s^{-2}
\cdots s^{m-1}\gamma_1s^{-(m-1)}s^m=\gamma_1 s(\gamma_1) \cdots s^{m-1}(\gamma_1) s^m.
\end{equation}
%and thus  $(\gamma_1 s)^m=\gamma_1 s(\gamma_1) \cdots s^{m-1}(\gamma_1) s^m$ for any integer $m$.
Since $UVP_n$ is torsion free, it follows from~(\ref{eq:w1m}) that $o(w_1)=o(s)$. As we mentioned above, $F_{\mathcal{O}_1}$ is invariant under conjugation by $s$, so for all $j=0,1,\ldots, m-1$, the term $s^j(\gamma_1)= s^j \gamma_1 s^{-j}$ of~(\ref{eq:w1m}) belongs to $F_{\mathcal{O}_1}$.
%\gamma_1 s$ has finite order in $UVB_n$, than it has the same order than $s\in S_n$, $o(s)$.
%In what follows, we shall distinguish between two cases, $\mathcal{O}(1,2)\neq \mathcal{O}(2,1)$ and $\mathcal{O}(1,2)= \mathcal{O}(2,1)$. 
Let $(i_0,j_0) \in \mathcal{O}_1$, where $1\leq i_0<j_0\leq n$. 
If $\mathcal{O}(i_0,j_0)\neq \mathcal{O}(j_0,i_0)$ (resp.\ $\mathcal{O}(i_0,j_0)= \mathcal{O}(j_0,i_0)$), let $\varepsilon=1$ (resp.\ $\varepsilon=2$). Then 
$\lvert \mathcal{O}(i_0,j_0)\rvert=\varepsilon n_1$, and $\lvert \mathcal{O}_1\rvert=2n_1$, $F_{\mathcal{O}_1}=\bigoplus_{(i, j) \in \mathcal{O}_1} F_{i,j}=\bigoplus_{q=0}^{n_1-1} F_{s^q(i_0),s^q(j_0)}$, and for $j=0, \ldots, n_1-1$, there exists $u_j \in F_{s^j(i_0),s^j(j_0)}$ such that $\gamma_1=u_0\cdots u_{n_1-1}$. Hence $u_0, \ldots, u_{n_1-1}$ belong to distinct free factors of $UVP_n$, so commute pairwise. Further:
\begin{equation}\label{eq:skuj}
\text{$s^k(u_j)\in F_{s^{j+k}(i_0),s^{j+k}(j_0)}$ for all $k\in \Z$ and $j=0, \ldots, n_1-1$,}
\end{equation}
where the superscript $j+k$ is taken modulo $n_1$. In particular, if $v \in F_{s^q(i_0),s^q(j_0)}$ for some $q=0,\ldots, n_1-1$, then $s^{n_1}(v) = v$ (resp.\ $s^{n_1}(v) = \alpha(v)$, where $\alpha\colon\thinspace F_{s^q(i_0),s^q(j_0)} \to F_{s^q(i_0),s^q(j_0)}$ is the automorphism that exchanges $\lambda_{s^q(i_0),s^q(j_0)} $ and $\lambda_{s^q(j_0),s^q(i_0)}$, as in Lemma~\ref{prep:lemma1}), and thus $s^{\varepsilon n_1}(v) = v$. It follows from this, the fact that $o(w_1)=o(s)$ and~(\ref{eq:w1m}) that:
\begin{align*}
1&=w_1^{\varepsilon n_1 o(s)}=(w_1^{\varepsilon n_1})^{o(s)}=(\gamma_1 s(\gamma_1) \cdots s^{\varepsilon n_1-1}(\gamma_1) s^{\varepsilon n_1})^{o(s)}=(\gamma_1 s(\gamma_1) \cdots s^{\varepsilon n_1-1}(\gamma_1))^{o(s)} s^{\varepsilon n_1 o(s)}\\
&=(\gamma_1 s(\gamma_1) \cdots s^{\varepsilon n_1-1}(\gamma_1))^{o(s)}.
\end{align*}
Since $UVP_n$ is torsion free, we conclude that $\gamma_1 s(\gamma_1)\cdots s^{\varepsilon n_1-1}(\gamma_1)=1$. Hence:
\begin{equation}\label{eq:prodsu}
u_0\cdots u_{n_1-1}  s\big(u_0\cdots u_{n_1-1}\big) \cdots s^{\varepsilon n_1-1}\big(u_0\cdots u_{n_1-1}\big)=1.
\end{equation}
Applying~(\ref{eq:skuj}), and projecting~(\ref{eq:prodsu}) into $F_{1,2}$, we see that:
\begin{equation}\label{eq:prodsin1}
1 =\prod_{i=0}^{\varepsilon-1} s^{in_1}(u_0) s^{in_1+1}\big( u_{n_1-1}\big)\cdots s^{(i+1)n_1-1}\big(u_1 \big)= \prod_{i=0}^{\varepsilon-1} s^{in_1}\big( u_0 s\big(u_{n_1-1}\big)\cdots s^{n_1-1}\big(u_1 \big)\big).
\end{equation}
We deduce from~(\ref{eq:prodsin1}) (resp.\ from~(\ref{eq:prodsin1}) and Lemma~\ref{prep:lemma1}) that $u_{0}=\prod_{j=1}^{n_1-1}s^{n_1-j}\big( u_j^{-1} \big)$ (resp.\ that there exists $\widetilde{u} \in F_{1,2}$ such that $u_{0}=\widetilde{u} s^{n_1}(\widetilde{u}^{-1})  \big(\prod_{j=1}^{n_1-1}s^{n_1-j}\big( u_j^{-1} \big)\big)$). Using the fact that $u_1,\ldots, u_{n_1-1}$ commute pairwise, it follows that:
\begin{equation}\label{eq:gamma1}
\gamma_{1}= u_{0}u_{1}\cdots u_{n_1-1}= \beta s^{n_1-1}\big(u_1^{-1}\big)\cdots s\big(u_{n_1-1}^{-1} \big) \cdot u_{n_1-1} \cdots u_1,
\end{equation}
where $\beta=1$ (resp.\ $\beta=\widetilde{u} s^{n_1}(\widetilde{u}^{-1})$). In what follows, if $a,b\in UVB_n$, we shall write $a\sim b$ if $a$ and $b$ are conjugate by an element of $UVP_n$. Let us show by induction that:
\begin{equation}\label{eq:gamma1s}
\gamma_1\cdot s \sim \beta s^{n_1-1}\big(u_1^{-1}\big)\cdots s^{t+1}\big(u_{n_1-(t+1)}^{-1} \big) s^t\big(u_{n_1-t}^{-1} \big) \cdot u_{n_1-t} \cdots u_1 \cdot s
%\gamma_1\cdot s \sim \beta s^{n_1-t}\big(u_t^{-1}\big) s^{n_1-t-1}\big(u_{t+1}^{-1}\big) \cdots  s\big(u_{n_1-1}^{-1} \big) \cdot u_{n_1-1} \cdots u_{t} \cdot s
\end{equation}
for all $t=1,\ldots, n_1$. If $t=1$ then the result follows directly from~(\ref{eq:gamma1}). So suppose that~(\ref{eq:gamma1s}) holds for some $1\leq t\leq n_1-1$. Let $M_0= \prod_{c=1}^{n_1-1} s^{c}(\widetilde{u})$, and for $1\leq t\leq n_1-1$, let $M_t=\prod_{c= 1}^{t-1}s^{t-c}\big(u_{n_1-t} \big)$. Note that $M_1=1$, and that $M_0=1$ if $\mathcal{O}(i_0,j_0)\neq \mathcal{O}(j_0,i_0)$. By~(\ref{eq:skuj}), for $c=1,\ldots,t-1$, $s^{t-c}\big(u_{n_1-t} \big)\in F_{s^{n_1-c}(i_0),s^{n_1-c}(j_0)}$, so for $1\leq t\leq n_1-1$, the factors of $M_t$ commute pairwise, and $M_t\in \bigoplus_{c=n_1-t+1}^{n_1-1} F_{s^{c}(i_0),s^{c}(j_0)}$. Thus:
\begin{equation}\label{eq:mtst}
M_t^{-1}s^t\big(u_{n_1-t}^{-1} \big)=\left(\prod_{c= 1}^{t-1}s^{t-c}\big(u_{n_1-t} \big)\right)^{-1} s^t\big(u_{n_1-t}^{-1} \big)= \left(\prod_{c=1}^{t-1}s^{c}\big(u_{n_1-t}^{-1} \big)\right) s^t\big(u_{n_1-t}^{-1} \big)=\prod_{c=1}^{t}s^{c}\big(u_{n_1-t}^{-1} \big).
\end{equation}
and since $s^t\big(u_{n_1-t}^{-1}\big)\in F_{i_0,j_0}$ and $u_{n_1-t} \cdots u_1\in \bigoplus_{i=1}^{n_1-t} F_{s^{i}(i_0),s^{i}(j_0)}$, we see that $M_t^{-1}s^t\big(u_{n_1-t}^{-1} \big)$ commutes with $u_{n_1-t} \cdots u_1$. So by~(\ref{eq:gamma1s}) and~(\ref{eq:mtst}), we have: 
\begin{align}
\gamma_1\cdot s &\sim \beta s^{n_1-1}\big(u_1^{-1}\big)\cdots s^{t+1}\big(u_{n_1-(t+1)}^{-1} \big) M_t M_t^{-1} s^t\big(u_{n_1-t}^{-1} \big) \cdot u_{n_1-t} \cdots u_1 \cdot s\notag\\
&= \beta s^{n_1-1}\big(u_1^{-1}\big)\cdots s^{t+1}\big(u_{n_1-(t+1)}^{-1} \big) M_t \cdot u_{n_1-t} \cdots u_1 \left(\prod_{c=1}^{t}s^{c}\big(u_{n_1-t}^{-1} \big)\right) \cdot s\notag\\
&= \beta s^{n_1-1}\big(u_1^{-1}\big)\cdots s^{t+1}\big(u_{n_1-(t+1)}^{-1} \big) \left(\prod_{c=1}^{t} s^{t-c}\big(u_{n_1-t} \big)\right) u_{n_1-(t+1)} \cdots u_1 \cdot s \cdot \prod_{c=0}^{t-1}s^{c}\big(u_{n_1-t}^{-1} \big),\label{eq:gamma1scom}
\end{align}
where we have used the fact that $M_t \cdot u_{n_1-t}=\prod_{c=1}^{t} s^{t-c}\big(u_{n_1-t} \big)$. Now by~(\ref{eq:skuj}), $\prod_{c=1}^{t} s^{t-c}\big(u_{n_1-t} \big)\in \bigoplus_{c=n_1-t}^{n_1-1} F_{s^{c}(i_0),s^{c}(j_0)}$, and since $\beta s^{n_1-1}\big(u_1^{-1}\big)\cdots s^{t+1}\big(u_{n_1-(t+1)}^{-1} \big)\in F_{i_0,j_0}$, we see that these two terms commute, and it follows from~(\ref{eq:gamma1scom}) that:
\begin{equation}\label{eq:gamma1scom2}
\gamma_1\cdot s \sim \left(\prod_{c=1}^{t} s^{t-c}\big(u_{n_1-t} \big)\right) \beta s^{n_1-1}\big(u_1^{-1}\big)\cdots s^{t+1}\big(u_{n_1-(t+1)}^{-1} \big)  u_{n_1-(t+1)} \cdots u_1 \cdot s \cdot \prod_{c=0}^{t-1}s^{c}\big(u_{n_1-t}^{-1} \big).
\end{equation}
One may check that $\prod_{c=1}^{t} s^{t-c}\big(u_{n_1-t} \big)$ is the inverse of $\prod_{c=0}^{t-1}s^{c}\big(u_{n_1-t}^{-1} \big)$, and using~(\ref{eq:gamma1scom2}), we conclude that~(\ref{eq:gamma1s}) holds for $t+1$. Taking $t=n_1$ in~(\ref{eq:gamma1s}), we obtain $\gamma_1\cdot s \sim \beta \cdot s$. If $\mathcal{O}(i_0,j_0)\neq \mathcal{O}(j_0,i_0)$ then  $\beta=1$, and the statement of the lemma holds in this case. So suppose that $\mathcal{O}(i_0,j_0)= \mathcal{O}(j_0,i_0)$. Then $\widetilde{u}\in F_{i_0,j_0}$, and for $c=1,\ldots, n_1-1$, $s^c(\widetilde{u})\in F_{s^c(i_0),s^c(j_0)}$. Thus the factors of $\widetilde{u} M_0=\prod_{c=0}^{n_1-1} s^{c}(\widetilde{u})$ commute pairwise, and hence $M_0^{-1} s^{n_1}(\widetilde{u}^{-1})= \prod_{c=1}^{n_1} s^{c}(\widetilde{u}^{-1})$. It follows that:
\begin{align*}
\gamma_1\cdot s \sim \beta \cdot s &=\widetilde{u} s^{n_1}(\widetilde{u}^{-1})\cdot s =  \widetilde{u} M_0 M_0^{-1} s^{n_1}(\widetilde{u}^{-1})\cdot s=\left(\prod_{c=0}^{n_1-1} s^{c}(\widetilde{u})\right) \left(\prod_{c=1}^{n_1} s^{c}(\widetilde{u}^{-1})\right) \cdot s\\
&=\left(\prod_{c=0}^{n_1-1} s^{c}(\widetilde{u})\right) \cdot s \cdot \left(\prod_{c=0}^{n_1-1} s^{c}(\widetilde{u}^{-1})\right).
\end{align*}
Now $\prod_{c=0}^{n_1-1} s^{c}(\widetilde{u})$ may be seen to be the inverse of $\prod_{c=0}^{n_1-1} s^{c}(\widetilde{u}^{-1})$, and therefore $\gamma_1\cdot s \sim s$ also in this case, which completes the proof of the lemma.
\end{proof}

%Previous Lemma evidently holds for all other pairs $\{i,j\}$.
%Based on this discussion, 
This enables us to prove Theorem~\ref{thm:thm_of_torsion}.

\begin{proof}[Proof of Theorem~\ref{thm:thm_of_torsion}]
Let $w\in UVB_n$ be a torsion element of order $r$. By Theorems~\ref{thm:tss} and~\ref{thm:tss2} there exist (unique) $u\in UVP_n$ and $s\in S_n$ such that $w=us$. As in~(\ref{eq:w1m}), we see that $u\cdot s(u)\cdots s^{r-1}(u)=1$ in $UVP_n$ and $s^r=1$. Further, from~(\ref{eq:decompFOk}), $u=\gamma_1\cdots\gamma_{n(s)}$, where for $l=1,\ldots, n(s)$, $\gamma_l \in F_{\mathcal{O}_l}$.
%We know that $UVB_n$ is isomorphic to $UVP_n\rtimes S_n$ and that  $UVP_n\cong F_{1,2}\times\cdots\times F_{i,j}\times\cdots\times F_{n-1,n}$, where $F_{i,j}=\langle\lambda_{i,j},\lambda_{j,i}\rangle$. We set $w=us$, where $u\in UVP_n$ and $s\in S_n$. 
%Letting the element $s \in S_n$, from $w=us$, to act on the generating set of $UVP_n$, $\{\lambda_{i,j}\ |\ 1\leq i\neq j\leq n\}$, and
%Therefore we can rewrite the element $u$ as
%$$u=\gamma_1\cdots\gamma_i\cdots\gamma_{m_s},$$
%where $\gamma_i \in F_{\mathcal{O}_i}$, for $1\leq i \leq {m_s}$, possibly trivial.
%It is possible that for some indices $\gamma_i=1$, which means that $u$ does not contain any element in $\langle O_i\rangle$, after possible cancellations.
%Thus:
%\begin{equation*}
%1=w^r=(us)^r=usus^{-1}s^2us^{-2}\cdots s^{r-1}us^{-(r-1)}\cdot s^r
%\end{equation*}
%By hypothesis $w^r=(us)^r=1$, and then $$\underbrace{us\cdot us\cdots us}_{r-\text{times}}= usus^{-1}s^2us^{-2}\cdots s^{r-1}us^{-(r-1)}\cdot s^r=1.$$
%By Theorems~\ref{thm:tss} and~\ref{thm:tss2}, we deduce that $u\cdot s(u)\cdots s^{r-1}(u)=1$ in $UVP_n$ and $s^r=1$. 
Let $l\in \{1,\ldots, n(s)\}$. Since $F_{\mathcal{O}_l}$ is invariant under conjugation by $s$, it follows from~(\ref{eq:decompFOk}) that $\gamma_l s(\gamma_l)\cdots s^{r-1}(\gamma_l)=1$ in $F_{\mathcal{O}_l}$. Since $\gamma_l s(\gamma_l)\cdots s^{r-1}(\gamma_l)$ may also be written as $(\gamma_l s)^r$ using~(\ref{eq:w1m}), we conclude from Lemma~\ref{prep:lemma2} that $\gamma_l s$ is conjugate to $s$ by an element of $F_{\mathcal{O}_l}$, or in other words, there exists $\Lambda_l\in F_{\mathcal{O}_l}$ such that $\gamma_l s=\Lambda_{l}\cdot s\cdot \Lambda_{l}^{-1}$.
%, and thus that 
%$u=\gamma_1\cdots\gamma_i\cdots\gamma_{m_s},$ where $\gamma_i\in F_{\mathcal{O}_i}$, for $1\leq i\leq {m(s)}$, it has to hold that
%$$\gamma_is(\gamma_i)\cdots s^{r-1}(\gamma_i)=1,$$
%in $F_{\mathcal{O}_i}$ for any $1\leq i\leq {m(s)}$. Previous Lemmas allows to conclude that the element $\gamma_i s$ is a conjugation of $s\in S_n$ by an element of $F_{\mathcal{O}_i}$, for any $1\leq i\leq {m(s)}$.
Let us prove by reverse induction that for all $m=1,\ldots, n(s)+1$:
\begin{equation}\label{eq:Lambdam}
w= \Lambda_{n(s)}\Lambda_{n(s)-1} \cdots \Lambda_{m} \gamma_1 \cdots \gamma_{m-1}\cdot s \cdot \Lambda_{m}^{-1} \cdots \Lambda_{n(s)-1}^{-1}\Lambda_{n(s)}^{-1}.
\end{equation}
If $m=n(s)+1$ then~(\ref{eq:Lambdam}) follows directly from the fact that $w=us=\gamma_1\cdots \gamma_{n(s)}\cdot s$. So suppose that~(\ref{eq:Lambdam}) holds for some $m\in \{2,\ldots, n(s)+1 \}$. Then using~(\ref{eq:decompFOk}) and the facts that $\gamma_1 \cdots \gamma_{m-2}\in \bigoplus_{i=1}^{m-2} F_{\mathcal{O}_i}$ and $\gamma_{m-1}\in F_{\mathcal{O}_m-1}$, we obtain:
\begin{align*}%\label{eq:Lambdam}
w &= \Lambda_{n(s)}\Lambda_{n(s)-1} \cdots \Lambda_{m} \gamma_1 \cdots \gamma_{m-2} (\gamma_{m-1} \cdot s) \cdot \Lambda_{m}^{-1} \cdots \Lambda_{n(s)-1}^{-1}\Lambda_{n(s)}^{-1}\\
&= \Lambda_{n(s)}\Lambda_{n(s)-1} \cdots \Lambda_{m} \gamma_1 \cdots \gamma_{m-2} \Lambda_{m-1} \cdot s \cdot \Lambda_{m-1}^{-1}  \Lambda_{m}^{-1} \cdots \Lambda_{n(s)-1}^{-1}\Lambda_{n(s)}^{-1}\\
&= \Lambda_{n(s)}\Lambda_{n(s)-1} \cdots \Lambda_{m} \Lambda_{m-1} \gamma_1 \cdots \gamma_{m-2} \cdot s \cdot \Lambda_{m-1}^{-1}  \Lambda_{m}^{-1} \cdots \Lambda_{n(s)-1}^{-1}\Lambda_{n(s)}^{-1},
\end{align*}
which proves~(\ref{eq:Lambdam}) for $m-1$. Taking $m=1$, we see that $w=\Lambda \cdot s \cdot \Lambda^{-1}$, where $\Lambda=\Lambda_{n(s)}\cdots \Lambda_{1}\in UVP_n$, and this completes the proof of the theorem.
%We will prove that this result suffices to complete the proof. Since $w=us$, with $u=\gamma_1\cdots\gamma_i\cdots\gamma_{m(s)}$, where the elements $\gamma_i$ pairwise commutes, it follows that
%\begin{align*}
%w&=us=\gamma_1\cdots\gamma_i\cdots\gamma_{m(s)}\cdot s\\
%&=\gamma_1\cdots\gamma_i\cdots\gamma_{m(s)-1}\Lambda_{m(s)}\cdot s\cdot \Lambda_{m(s)}^{-1}\\
%&\vdots\\
%&=\gamma_1\cdots\gamma_{i-1}\Lambda_{m(s)}\cdots\Lambda_i\cdot s\cdot\Lambda_i^{-1}\cdots\Lambda_{m(s)}^{-1}\\
%&\vdots\\
%&=\Lambda_{m(s)}\cdots\Lambda_i\cdots\Lambda_1\cdot s\cdot\Lambda_1^{-1}\cdots\Lambda_i^{-1}\cdots\Lambda_{m(s)}^{-1}\\
%&=\Lambda\cdot s \cdot \Lambda^{-1}.
%\end{align*}
%To obtain the above equalities, we just apply the equalities $\gamma_is=\Lambda_{i}\cdot s\cdot\Lambda_{i}^{-1}$, starting with $\gamma_{m(s)}$ and finishing with $\gamma_1$, which hold for every $1\leq i\leq m(s)$, and using the fact that $\Lambda_i$ belongs to  $F_{\mathcal{O}_i}$ , and thus $\gamma_i$ commutes with every $\Lambda_j$, for   $1\leq i \not= j\leq m(s)$. %Therefore, we have that $w=\Lambda s\Lambda^{-1}$, where $\Lambda\in UVP_n$ and this completes the proof.
\end{proof}

\begin{rem}
It is an open question whether a result similar to that of Theorem~\ref{thm:thm_of_torsion} holds for $VB_n$ and for $WB_n$.
\end{rem}

%From Theorem~\ref{thm:thm_of_torsion}, we obtain the following two corollaries.

%\begin{cor}[Gon\c{c}alves--Guaschi--Ocampo~\cite{GGO17}]
%Let $n\geq 3$, the torsion elements of $B_n/[P_n, P_n]$ are equal to the torsion elements of $S_n$. Moreover, for any element $s$ in $S_n$, of odd order $r$, there exists an element $b\in B_n/[P_n, P_n]$, of order $r$ such that $\widehat{\pi}(b)=s$.
%\end{cor}

%\begin{cor}
%Let $n\geq 3$,  for any element $s$ in $S_n$, of odd order $r$, there exists an element $b\in B_n/[P_n, P_n]$ (considered as subgroup of $UVB_n$, of order $r$ such that $b$ is conjugated to $\iota(s)$ by an element of $UVP_n$.
%\end{cor}

\begin{acknow}
The first and third authors are supported by the French project `AlMaRe' (ANR-19-CE40-0001-01). 
 %The first  and third authors was also supported by the project ARTIQ (ERDF/RIN). 
\end{acknow}

\end{document}